\title{Computing with voting trees}
\author{Jennifer Iglesias \thanks{Department of Mathematical Sciences,
Carnegie Mellon University, Pittsburgh, PA 15213, e-mail: jiglesia@andrew.cmu.edu.
}
\and
Nathaniel Ince \thanks{Department of Mathematical Sciences,
Carnegie Mellon University, Pittsburgh, PA 15213, e-mail: nince@andrew.cmu.edu.
}
\and
Po-Shen Loh\thanks{Department of Mathematical Sciences,
Carnegie Mellon University, Pittsburgh, PA 15213.  E-mail: {\tt
ploh@cmu.edu}. Research supported by NSF grant DMS-1201380, an NSA
Young Investigators Grant and a USA-Israel BSF Grant.}
}
\date{}
\newtheorem{theorem}{Theorem}[section]
\newtheorem*{theorem*}{Theorem}
\newtheorem{definition}[theorem]{Definition}
\newtheorem{lemma}[theorem]{Lemma}
\newtheorem{corollary}[theorem]{Corollary}
\begin{document}
\maketitle

\begin{abstract}
  The classical paradox of social choice theory asserts that there is no
  fair way to deterministically select a winner in an election among more
  than two candidates; the only definite collective preferences are between
  individual pairs of candidates.  Combinatorially, one may summarize this
  information with a graph-theoretic tournament on $n$ vertices (one per
  candidate), placing an edge from $u$ to $v$ if $u$ would beat $v$ in an
  election between only those two candidates (no ties are permitted).  One
  well-studied procedure for selecting a winner is to specify a complete
  binary tree whose leaves are labeled by the candidates, and evaluate it
  by running pairwise elections between the pairs of leaves, sending the
  winners to successive rounds of pairwise elections which ultimately
  terminate with a single winner. This structure is called a \emph{voting
  tree}.
  
  Much research has investigated which functions on tournaments are
  computable in this way.  Fischer, Procaccia, and Samorodnitsky
  quantitatively studied the computability of the \emph{Copeland rule},
  which returns a vertex of maximum out-degree in the given tournament.
  Perhaps surprisingly, the best previously known voting tree could only
  guarantee a returned out-degree of at least $\log_2 n$, despite the fact
  that every tournament has a vertex of degree at least $\frac{n-1}{2}$.
  In this paper, we present three constructions, the first of which
  substantially improves this guarantee to $\Theta(\sqrt{n})$.  The other
  two demonstrate the richness of the voting tree universe, with a tree
  that resists manipulation, and a tree which implements arithmetic modulo
  three.
\end{abstract}

\section{Introduction}

The study of elections is a complex field. When deciding between two
candidates, one may consider several procedures, but the familiar majority
vote can actually be shown to have nice mathematical properties, as
described in detail by May in \cite{majority}.  Unfortunately, these
properties are difficult to extend to a multi-agent election, and far less
is understood in that setting. One approach is to run independent
2-candidate majority elections between pairs of candidates, and select a
winner based upon those results.  Combinatorially, running all such
elections would produce a tournament (an oriented complete graph, in which
every pair of vertices spans exactly one directed edge) summarizing all
pairwise preferences.

Yet even though such a tournament contains much information about the
candidates, selecting a winner given such a tournament, known in the social
sciences as a \emph{tournament solution}, is a difficult question. Indeed,
McGarvey \cite{tournament00} showed that even with a number of voters only
polynomial in $n$ (the number of agents), every possible tournament is
already achievable in this way.  So, there may be many lists of preferences
among multiple agents that fit the same description. One natural method is
to select a candidate that beats as many other candidates as possible in
the pairwise majority elections. This is known as the \emph{Copeland
solution}, and is equivalent to identifying a vertex of maximum out-degree
in the preference tournament.  Such a vertex may not be unique, so it is
desirable to develop a procedure which selects a vertex with similar
properties, while being completely deterministic.  One natural solution is
to construct a \emph{voting tree}, and this paper analyzes the
combinatorial aspects of such structures, inspired by recent work of
Fischer, Procaccia, and Samorodnitsky \cite{votingTrees}.


We first coordinate our terminology with that typically used in the social
science literature.  Let $\mathcal{T}_n$ denote the set of all tournaments
on the vertex set $[n]$.  An \emph{agenda}\/ is a mapping from
$\mathcal{T}_n$ to $[n]$.  A \emph{match}\/ $M_{i,j}$ is a particular
agenda of the form 
\begin{displaymath}
  M_{i,j}(T)=\left\{ 
  \begin{array}{ll}
    i & \text{if $i=j$ or $\overrightarrow{ij} \in T$;} \\ 
    j & \text{otherwise.}
  \end{array}
  \right.
\end{displaymath}

We will refer to $M_{i,j}$ as the \emph{match between $i$ and $j$}.  Using
this as the basic building block, we may now interpret certain labeled
binary trees as agendas.  Indeed, let $S$ be a (not necessarily complete)
binary tree, in which each node has either 0 or 2 children, where each leaf
is labeled with an integer in $[n]$.  We permit multiple leaves to receive
the same label.  The corresponding agenda $A$ is computed as follows.
Given a tournament $T$ on $[n]$, we recursively label each node of the tree
$S$ by the result of the match between its children's labels.  The ultimate
label at the root is the output of this agenda, and such agendas are called
voting trees.  For instance, $M_{i,j}$ may be represented by a three node
binary tree, as shown below. 
\begin{figure}[h]
\includegraphics[scale=1]{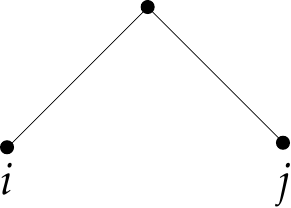} 
\centering
\caption{\footnotesize The $M_{i,j}$ voting tree.}
\end{figure}


Voting trees were first introduced by Farquharson in \cite{vtree00} and
further investigated by various researchers in \cite{vtree01},
\cite{vtree03}, \cite{vtree02}, \cite{vtree04}, and \cite{votingrules}.
Fischer, Procaccia, and Samorodnitsky focused on the problem of
quantitatively approximating the Copeland solution with voting trees. A
voting system which is implementable by a voting tree, implies that the
system is also implementable via backwards induction, as shown in
\cite{backwards}.  Both \cite{socialchoice1} and \cite{socialchoice2}
provide social choice functions which are implementable via backwards
induction.

Let the \emph{performance
guarantee}\/ of a given voting tree $\Theta$ be the minimum out-degree
which it ever produces on any input, i.e.,
\begin{displaymath}
  \min_{T \in \mathcal{T}_n} \{ \text{out-degree of $\Theta(T)$}
  \} \,.
\end{displaymath}

When the number of agents $n$ is a power of two, it is relatively easy to
construct a voting tree with performance guarantee $\log_2 n$: take a
complete binary tree with $n$ leaves, and label the leaves with an
arbitrary permutation of $[n]$.  Then, given any tournament as input, the
ultimate winner must win exactly $\log_2 n$ matches on the way to the root,
and since all leaves are distinctly labeled, the winner must have met a
different opponent at each of those matches.  Therefore, the winner must
always have out-degree at least $\log_2 n$.  On the other hand, for this
voting tree, an adversary can easily construct a suitable tournament to
provide as input such that the winner only has out-degree exactly $\log_2
n$.  In contrast, it is well-known that every $n$-vertex tournament has a
vertex of out-degree at least $(n-1)/2$, so this performance guarantee
appears to be quite weak.

Perhaps surprisingly, prior to our work, this was the best known voting
tree according to this metric, despite non-trivial effort: in
\cite{votingTrees}, the problem of improving this lower bound was even
suspected to currently be ``out of reach.'' Fischer, Procaccia, and
Samorodnitsky also proved the strongest negative result, showing that no
tree can guarantee a winner with out-degree at least $3/4$ of the maximum
out-degree in the input tournament; in particular, quantifiably
strengthening an earlier result of Moulin \cite{tournchoose} which showed
that no voting tree can always identify the Copeland solution. Srivastava
and Trick \cite{votingrules} showed that Moulin's result is also a minimal
example.  However, these lower and upper bounds are extremely far apart,
and even the case $n = 6$ is still not settled.

This article proves a number of positive results which demonstrate the
rather rich computational power of voting trees.  We first exhibit a
construction which substantially improves the logarithmic lower bound of
the trivial construction above.

\begin{theorem}
  There exist voting trees $\Omega_n$ with performance guarantee at least
  $(\sqrt{2} + o(1)) \sqrt{n}$.
  \label{thm:lower}
\end{theorem}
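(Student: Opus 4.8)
The plan is to reduce the theorem to a purely combinatorial statement and then build $\Omega_n$ by induction. First I would observe that for a \emph{lower} bound on the performance guarantee it suffices to control the \emph{certified} out-degree of the winner: if $w = \Omega_n(T)$ defeats the opponents $u_1,\dots,u_r$ that it actually faced in matches along its path from leaf to root, then the out-degree of $w$ in $T$ is at least the number of \emph{distinct} labels among $u_1,\dots,u_r$. So it is enough to design, for each $n$, a voting tree whose eventual winner is always guaranteed to defeat at least $(\sqrt 2 + o(1))\sqrt n$ distinct opponents, for every input tournament $T$ and no matter who emerges as winner.

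Next I would aim for triangular-number bookkeeping. The target $\sqrt{2n}$ is exactly the $k$ solving $\binom{k+1}{2}\approx n$, which suggests constructing a tree $\Omega^{(k)}$ on a candidate set of size $\binom{k+1}{2}$, partitioned into pairwise-disjoint fresh batches $B_1,\dots,B_k$ with $|B_i|\approx i$, guaranteeing a winner of certified out-degree at least $k$. The tree would process the batches in $k$ stages, maintaining a \emph{champion} carrying a certificate of having beaten a growing set of distinct opponents; at stage $i$ the current champion is confronted with a sub-tree built on $B_i$. Since the batches are disjoint, any opponent newly defeated inside $B_i$ is automatically distinct from everything certified earlier, so the certificate is additive across stages: if each stage contributes at least one new certified defeat while \emph{retaining} the previously certified ones, then after $k$ stages the winner has beaten at least $k$ distinct candidates, and $k = (\sqrt 2 + o(1))\sqrt n$ follows from $\sum_{i \le k} i = \binom{k+1}{2}$. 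General $n$ is handled by padding to the nearest triangular size, affecting only the $o(1)$ term.

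I expect the main obstacle to be the design of the single-stage gadget, and this is precisely where the old $\log_2 n$ barrier lives. Writing $N(i)$ for the number of candidates needed to guarantee certified out-degree $i$, a naive gadget that pits the incoming champion against the balanced-tree winner of $B_i$ through one match fails: when the fresh side wins, the usurper inherits no certificate, so the recursion degenerates, and insisting that both sides independently carry a certificate forces $N(i) = 2N(i-1)$, reproducing $\log_2 n$. The crux is to instead boost the invariant from $i-1$ to $i$ using only $\Theta(i)$ fresh candidates, i.e.\ to achieve the \textbf{additive} recurrence $N(i) = N(i-1) + \Theta(i)$ rather than the doubling one. This requires an \emph{inheritance} mechanism: the sub-tree on $B_i$ must be structured (not merely balanced) so that whoever survives stage $i$ — the old champion or a usurper from $B_i$ — is certified to beat at least $i$ distinct opponents, for \emph{every} outcome of the internal matches.

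The way I would try to realize this is to shape each batch's sub-tree so that all of its possible winners beat a common distinguished vertex of the batch, and more generally so that the set of opponents a usurper is forced to defeat inside $B_i$ is large enough to replace the certificate it fails to inherit from the champion it displaced; routing the match against the incoming champion appropriately should then preserve, rather than reset, the running count. Verifying that this invariant is maintained under every pattern of usurpations — and that the batch sizes $|B_i|\approx i$ are simultaneously large enough to supply the needed internal certificates yet small enough to sum to $\binom{k+1}{2}$ — is the step I anticipate will require the most care, and is the heart of the argument separating the new $\Theta(\sqrt n)$ guarantee from the trivial logarithmic one.
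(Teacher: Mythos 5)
You have correctly identified the right recurrence (an additive step, hence triangular numbers and the $\sqrt{2n}$ asymptotics) and the right obstacle (a usurper who displaces the champion inherits no certificate). But the proposal stops exactly where the actual construction must appear, and the gadget you sketch in its place cannot work with the parameters you assign to it. If stage $i$ confronts the champion with a subtree built only on a fresh batch $B_i$ with $|B_i|\approx i$, then a usurper emerging from $B_i$ can only certify defeats against members of $B_i$ and the displaced champion; to reach a certificate of size $i$ it must therefore beat $i-1$ distinct members of $B_i\setminus\{\text{itself}\}$, i.e.\ \emph{all} of them. No voting tree on $B_i$ can guarantee that its winner is a Condorcet winner of $B_i$ (feed it a cyclic tournament), so the invariant cannot be maintained with disjoint batches of linear size; the ``common distinguished vertex'' idea certifies only one defeat, not $i$.

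The paper's resolution is genuinely different and is the idea you are missing. At the step from guarantee $k$ on $n$ candidates to guarantee $k+1$ on $n+k+1$ candidates, only \emph{one} fresh candidate (call it $0$) plays a special role. One forms the tree $\Lambda_{0:[N]}$ matching $0$ against $N=\binom{n+k}{n}$ opponents, and replaces the $i$-th opponent leaf by a full copy of $\Omega_n$ relabelled by the $i$-th $n$-subset $S\subset\{1,\dots,n+k\}$ --- one copy for \emph{every} $n$-subset, with heavy overlap rather than disjointness. A pigeonhole argument shows that the set $W$ of subtree winners satisfies $|W|\ge k+1$: otherwise some $n$-subset $S'$ would avoid $W$ entirely, yet the winner of its copy lies in $S'$ and in $W$, a contradiction. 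Then either $0$ wins and beats all of $W$ (out-degree at least $k+1$), or some $w\in W$ wins, beats $0$, and already carries a certificate of $k$ defeats from its own recursive copy, which does not contain $0$. The usurper's certificate thus comes from the full recursive guarantee on an $n$-set, not from inside a small batch; the price is an exponentially large tree, which the problem statement permits. Without this (or some equivalent inheritance mechanism) the proposal does not yield the theorem.
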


We then show two constructions which we discovered while investigating this
problem.  We feel that they may be of independent interest, to different
communities.  The first is relevant to social choice theory, and achieves a
rather counterintuitive result.  As motivation, observe that any upper
bound for our problem consists of a procedure that takes a voting tree as
input, and produces a suitable tournament as output, such that when tree is
applied to the tournament, the resulting winner has low out-degree.  The
most natural candidate tournaments have the following form.

\begin{definition}
  A \textbf{perfect manipulator tournament} is an $n$-vertex tournament
  with vertex set $\{\alpha\} \cup B \cup C$, where $B$ and $C$ are
  nonempty, the single element $\alpha$ (the ``manipulator'') defeats every
  member of $B$, every member of $B$ defeats every member of $C$, and every
  member of $C$ defeats $\alpha$.
\end{definition}

One might suspect that given any voting tree, if one wants to design a
tournament for which a particular vertex $\alpha \in [n]$ is the winner,
this should be accomplished by having all vertices that $\alpha$ defeats
able to beat all vertices that win against $\alpha$, i.e., by constructing
a suitable perfect manipulator tournament.  The task of improving the upper
bound would then be reduced to specifying and controlling the size of the
set $B$ of vertices defeated by the manipulator $\alpha$.  Surprisingly, this
intuition is completely incorrect.

\begin{theorem}
  \label{thm:manipulator} 
  For every integer $n \geq 4$ which is a power of two, there is a voting
  tree $\Psi$ such that for every $\alpha \in [n]$ and every perfect
  manipulator tournament $T$ with $\alpha$ as the manipulator, $\alpha$
  does not win when $\Psi$ is applied to $T$.
\end{theorem}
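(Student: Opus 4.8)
\section*{Proof proposal}

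The plan is to first strip away all the irrelevant internal structure of a perfect manipulator tournament and reduce the statement to a purely ternary question. In such a tournament the three blocks $\{\alpha\}$, $B$, $C$ form a directed triangle ($\alpha\to B\to C\to\alpha$), and the key observation is that for any two vertices $u,v$ the \emph{block} containing the winner of the match $M_{u,v}$ depends only on the blocks of $u$ and $v$: if they lie in the same block the winner stays in that block (whatever the internal orientation), and if they lie in different blocks the winner is dictated by the triangle. Encoding the blocks as colors with $\alpha\mapsto 0$, $B\mapsto 1$, $C\mapsto 2$, the induced ``block match'' is
\[
  \mathrm{blk}(x,y) = \begin{cases} x & \text{if } y-x \equiv 0,1 \pmod 3,\\ y & \text{if } y-x\equiv 2 \pmod 3,\end{cases}
\]
i.e.\ the rock--paper--scissors rule with $0$ beating $1$ beating $2$ beating $0$. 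By induction on the tree, the block of $\Psi(T)$ is obtained by evaluating $\Psi$ under $\mathrm{blk}$ on the colored leaves, and $\alpha$ wins exactly when this evaluation returns $0$ (the only singleton block). Since the internal orientations of $B$ and $C$ do not affect the block of the output, the theorem reduces to constructing a \emph{single} labeled tree $\Psi$ whose $\mathrm{blk}$-evaluation is never $0$ over all colorings in which exactly one label-class is colored $0$ and color $2$ is used at least once.

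Before building $\Psi$, I would record why the obvious candidates fail, since this both rules out easy attempts and dictates the shape of the construction. If the leaves carry \emph{distinct} labels, then $\alpha$ occupies a unique leaf, and it reaches the root precisely when every sibling subtree along its path avoids color $2$; for a subtree colored only with $\{1,2\}$ this means each such sibling block meets $B$. An adversary can always achieve this while keeping $C$ nonempty unless \emph{all} of $\alpha$'s sibling blocks are singletons, and no single tree can arrange this for every $\alpha$ at once. Hence repeated leaf labels are essential. They are also what makes progress possible: because a $C$-vertex can defeat $\alpha$ only across the single edge $2\to 0$, the construction must guarantee that, for every potential manipulator $\alpha$, the one guaranteed $C$-vertex can be made to resurface against $\alpha$ on its climb to the root.

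The construction I would attempt is recursive in $n=2^k$, with every vertex repeated so that the root is a match $\mathrm{blk}(P,Q)$ of two subtrees each of which sees all $n$ labels. A convenient sufficient condition is: the output is nonzero provided that whenever $\alpha$ is the champion of one side, the coloring (which has $C\neq\emptyset$) forces the other side to output color $2$; indeed $\mathrm{blk}(0,2)=2$ and $\mathrm{blk}(1,\cdot),\mathrm{blk}(2,\cdot)\neq 0$, so the only dangerous case $\mathrm{blk}(0,\{0,1\})=0$ is then excluded. I would prove correctness by strengthening the statement to an inductive invariant describing, for each coloring, the \emph{set} of colors a subtree can output as a function of which colors appear and with what multiplicities, chosen to be preserved under a single $\mathrm{blk}$-combination and to forbid output $0$ as soon as color $2$ is present. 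The base case $n=4$ is a finite check, and the doubling step should reduce the invariant for $\Psi_{2n}$ to that for $\Psi_n$.

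The hard part will be the explicit construction together with the inductive step, and here the main obstacle is genuine rather than cosmetic. Because $\mathrm{blk}$ is non-associative and the adversary is free to choose which vertices lie in $B$ versus $C$, the natural symmetric designs do not work: for instance, taking $P$ and $Q$ to be a caterpillar and its reversal already fails, since one can color the shared labels so that $\alpha$ loses its lead on one side yet recaptures it through a $1\to 0$ swing on the other, producing output $0$. Engineering the repeats so that the $2\to0$ ``comeback'' is simultaneously available against every candidate $\alpha$, and finding an invariant that at once forbids color $0$ in the presence of color $2$ and still carries enough information about colors $1$ and $2$ to survive the root match, is the delicate core of the argument; once the invariant is correct, the base case and the doubling recursion are routine.
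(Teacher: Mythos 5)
Your reduction to the ternary ``block'' game is sound and is essentially how the paper also thinks about the problem (it tracks the classes $A=\{\alpha\}$, $B$, $C$ of intermediate winners rather than the winners themselves, and $\alpha$ wins iff the class of the output is $A$). But the proposal stops exactly where the theorem begins: you never exhibit the tree $\Psi$, you do not state the inductive invariant you would prove, and you yourself flag the construction and its invariant as ``the delicate core of the argument'' that remains to be engineered. Since the entire content of the theorem is the existence of such a tree, this is a genuine gap, not a cosmetic one. Moreover, the sufficient condition you propose for the root match (whenever $\alpha$ is champion of one side, the other side is forced to output color $2$) is a harder target than necessary, and there is no evidence in the proposal that any doubling recursion actually preserves an invariant of this kind.

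For comparison, the paper's construction does not take the form of a single root match between two subtrees each seeing all labels. It builds two gadgets and composes them. First, $\Lambda_i$, which matches a fixed label $i$ against every other label: on a perfect manipulator tournament its winner's class is the class that \emph{beats} the class of $i$, so $\Lambda_i$ rotates $A\mapsto C$, $B\mapsto A$, $C\mapsto B$; substituting $\Lambda_j$ for each leaf $j$ gives $\Lambda_i^2$, which rotates the other way ($A\mapsto B$, $B\mapsto C$, $C\mapsto A$). Second, a ``shuffle'' tree $\Phi$ on the leaf sequence $(1,\dots,n,\phi(1),\dots,\phi(n))$, whose winner is provably never in $C$. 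Plugging $\Phi$ into the distinguished leaves of $\Lambda^2$ then yields an output in the image of $A\cup B$ under the rotation, i.e.\ in $B\cup C$, which excludes $\alpha$ outright. In your color language: instead of forcing a $2$ to meet $\alpha$ at the root, the paper first guarantees the answer avoids color $2$ and then applies a tree that cyclically relabels the colors so that the avoided color becomes $0$. If you want to complete your proposal, constructing something playing the role of $\Phi$ (an output guaranteed to avoid one specific class) plus a class-rotation gadget is the missing idea.
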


Note that the result holds regardless of how large $B$ is, and in
particular, even if $C$, the set of agents who defeat $\alpha$, has size as
small as 1.  The result obviously cannot hold for all $n$, because, for
example, for $n = 3$, one can construct a suitable perfect manipulator
tournament for any tree $\Theta$ by letting $T$ be the unlabeled directed
3-cycle, and feeding it into $\Theta$; whichever vertex wins is labeled as
the manipulator $\alpha$, and $B$ and $C$ are assigned cyclically.  Ad-hoc
constructions exist for several other values of $n$, but these are not
easily generalizable.

\medskip

Our final construction demonstrates that voting trees can be harnessed for
computation.  The first voting tree we discovered that beat the logarithmic
lower bound was built using ``arithmetic gates'' which operated on the most
fundamental case $n=3$.  This is a voting tree analogue of the classical
arithmetic circuits built from AND and OR logical gates.

\begin{theorem}
  There exist voting trees $\Sigma$ and $\Pi$, with leaves labeled by $\{0,
  1, 2, X, Y\}$, with the following properties.  Let $T$ be an arbitrary
  non-transitive tournament on the vertex set $\mathbb{F}_3 = \{0, 1, 2\}$,
  i.e., one of the two cyclic orderings of the vertex set.  Let $x$ and $y$
  be arbitrary elements of $\mathbb{F}_3$.  Then, if in $\Sigma$ and $\Pi$,
  all occurrences of $X$ and $Y$ are replaced by $x$ and $y$, and both
  trees are evaluated on $T$, their corresponding outputs are precisely the
  elements in $\mathbb{F}_3$ corresponding to $x+y$ and $xy$, respectively.
  \label{thm:arithmetic}
\end{theorem}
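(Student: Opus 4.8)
The plan is to exploit the fact that there are only two non-transitive tournaments on $\mathbb{F}_3$, namely the two cyclic orientations $T_1$ (with $0 \to 1 \to 2 \to 0$) and its reverse $T_2$, and that these are interchanged by the automorphism $\phi(z) = -z$ of $\mathbb{F}_3$. First I would record a \emph{reflection identity}: for any match, $M_{i,j}(T_2) = -M_{-i,-j}(T_1)$, since $i$ beats $j$ in $T_2$ exactly when $-i$ beats $-j$ in $T_1$. Propagating this up an entire tree, I get that for any voting tree $\Theta$ with leaves labeled by $\{0,1,2,X,Y\}$,
\[
\Theta_{x,y}(T_2) = -\,\Theta^{-}_{-x,-y}(T_1),
\]
where $\Theta^{-}$ denotes $\Theta$ with every \emph{constant} leaf label negated (and the variable leaves untouched). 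This reduces the design problem to the single tournament $T_1$ together with a symmetry bookkeeping.

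For addition, the target satisfies $(-x)+(-y) = -(x+y)$, so if I build $\Sigma$ to (i) compute $x+y$ on $T_1$ and (ii) be invariant under negating its constant leaves (so $\Sigma^{-} = \Sigma$, achievable by placing the labels $1$ and $2$ in mirror-image positions and pairing each $0$ with itself), then the identity gives $\Sigma_{x,y}(T_2) = -\Sigma_{-x,-y}(T_1) = -\bigl((-x)+(-y)\bigr) = x+y$ for free. The arithmetic insight guiding the $T_1$ construction is that the basic gate $M_{X,Y}$ returns, on $T_1$, the element $w \in \{x,y\}$ whose successor $w+1$ is the other input; since $0+1+2 = 0$, for $x \neq y$ one has $x+y = 1-w$, whereas on the diagonal $x=y$ the winner equals the common value and the correct output is instead $-x$. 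So I would assemble $\Sigma$ from a handful of small subtrees (``gates'') realizing the map $w \mapsto 1-w$ and an override on the diagonal, supplying through dedicated constant leaves those output values, such as $1+2 = 0$, that are not present among $\{x,y\}$; correctness on $T_1$ is then a finite check over the nine pairs, and $T_2$ follows by the symmetry.

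For multiplication the same identity gives $\Pi_{x,y}(T_2) = -\Pi^{-}_{-x,-y}(T_1)$, but now $(-x)(-y) = xy$, so a constant-negation-\emph{invariant} tree would compute $xy$ on $T_1$ and $-xy$ on $T_2$, correct only when the product vanishes. The remedy is to instead arrange that negating the constants of $\Pi$ negates its $T_1$ output, i.e.\ $\Pi^{-}_{a,b}(T_1) = -(ab)$; then $\Pi_{x,y}(T_2) = -\bigl(-((-x)(-y))\bigr) = xy$ as required. Concretely I would exploit the simple structure of $xy$ — it is $0$ whenever $x=0$ or $y=0$, and otherwise lies in $\{1,2\}$ with $1\cdot 1 = 2\cdot 2 = 1$ and $1\cdot 2 = 2$ — building a zero-absorbing gate that feeds a gate multiplying the nonzero residues, and then either impose the antisymmetric constant pattern above or simply verify $\Pi$ directly on $T_2$ by the nine remaining cases.

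The hard part, for both trees, is that the match is the only nonlinearity available and it \emph{reverses} its behavior between $T_1$ and $T_2$, so one fixed tree must compute the same function under two opposite interpretations of every comparison; the reflection identity tames this, but it forces the delicate constant-negation symmetry — invariant for $\Sigma$, antisymmetric for $\Pi$. Compounding this is the constraint that a voting tree can only ever output a literal leaf label, so every value not already present among $\{x,y\}$ must be injected through a constant leaf that the match dynamics select at precisely the right inputs. Designing the gates so that these selections fire correctly on all nine input pairs while respecting the required symmetry is the crux; once the gates are fixed, correctness is a routine finite verification.
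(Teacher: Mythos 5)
There is a genuine gap: Theorem \ref{thm:arithmetic} is an existence statement whose entire content is the exhibition of the trees $\Sigma$ and $\Pi$, and your proposal never actually constructs either one. Your reflection identity is correct and is a nice organizing principle (negation is an isomorphism between the two cyclic tournaments, so $\Theta_{x,y}(T_2) = -\Theta^{-}_{-x,-y}(T_1)$ propagates by induction from the single-match case), and your observation that the basic match of two yield gates already returns $-x-y$ off the diagonal matches what the paper proves about its Figure \ref{fig:almostSub} gate. But the two places you defer --- the ``override on the diagonal'' for addition and the ``zero-absorbing gate that feeds a gate multiplying the nonzero residues'' for multiplication --- are exactly where the difficulty lives. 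A voting tree cannot branch on whether $x=y$; the paper's resolution is to compose three copies of the simple gate (Figure \ref{fig:negSum}) arranged so that the diagonal case self-corrects through the composition, and this arrangement is the substance of Lemma \ref{lem:negSum}, not a routine finite check one can wave at. Likewise the paper's multiplication gate is not a direct case construction but goes through a separately built squaring gate (itself a two-stage construction, Lemmas \ref{lem:squaring-1} and \ref{lem:square-gate}) and the polarization identity $X^2+Y^2-(X+Y)^2=-2XY\equiv XY \pmod 3$; you give no concrete candidate for your alternative.

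A second, smaller gap: your symmetry reduction adds a structural constraint that you assert but do not verify is satisfiable. The claim $\Sigma^{-}=\Sigma$ cannot hold literally as labeled trees once a leaf carries the label $1$ or $2$; what you need is that $\Sigma^{-}$ and $\Sigma$ evaluate identically, which would follow from a tree automorphism (generated by child swaps, using symmetry of the match function) carrying the labeling to its constant-negation. That is an extra design requirement layered on top of ``computes $x+y$ on $T_1$,'' and for $\Pi$ you need the opposite (antisymmetric) pattern; you do not show these constraints are simultaneously realizable with the functional ones. The paper sidesteps all of this by verifying each gate on both tournaments directly. In short: the strategy is plausible and genuinely different in organization, but as written it reduces the theorem to an unsolved design problem rather than proving it.
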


These trees $\Sigma$ and $\Pi$ can be interpreted as addition and
multiplication gates in the context of voting trees, where the $X$ and $Y$
represent the two inputs to each gate.  These gates can then be chained
together as robust building blocks which independently perform consistently
across input tournaments.  The requirement of non-transitivity is
essential, because if a transitive tournament is input, no non-trivial
voting tree can avoid having the dominant vertex win.


\section{Lower bound}

We now move to describe the constructions.  We start by creating voting
trees $\Omega_n$ with performance guarantees of $(\sqrt{2} + o(1))
\sqrt{n}$, where $n$ is the number of agents.  The following tree serves as
our fundamental building block.

\begin{definition}
  Let $i$ be a label between 1 and $n$ inclusive, and let $S \subset [n]
  \setminus \{i\}$ be a set of other labels.  Construct the complete binary
  tree with exactly $|S|$ pairs of leaves, and for each label $s \in S$,
  place the labels $i$ and $s$ on a distinct pair of leaves, as illustrated
  in Figure \ref{fig:against-S}.  We call the resulting voting tree
  $\Lambda_{i:S}$.
  \label{def:against-S}
\end{definition}

\begin{figure}[h]
\includegraphics[scale=1]{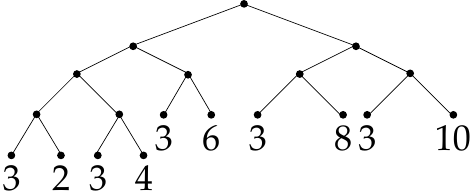} 
\centering
\caption{\footnotesize The $\Lambda_{3:S}$ voting tree for $S = \{2, 4, 6, 8,
10\}$.}
\label{fig:against-S}
\end{figure}

The consequences of placing candidate $i$ in competition with every
candidate in $S$ turn out to be quite convenient.

\begin{lemma}
  When $\Lambda_{i:S}$ is evaluated on a tournament $T$, the result is
  either $i$, if $i$ defeats every element of $S$ according to $T$, or the
  result is some vertex $s \in S$ with the property that $s$ defeats $i$ in
  $T$.
  \label{lem:against-S}
\end{lemma}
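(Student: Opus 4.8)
The plan is to prove a slightly stronger statement by structural induction, tracking which labels can possibly appear at each internal node of $\Lambda_{i:S}$. The essential feature of the construction is that the two children of every lowest match are leaves labeled $i$ and some $s \in S$, so that match is an instance of $M_{i,s}$; its winner is $i$ when $i$ defeats $s$, and $s$ itself when $s$ defeats $i$. In particular, each such winner lies in the set $W = \{i\} \cup \{s \in S : \overrightarrow{si} \in T\}$ of ``safe'' labels. Note also that, since the leaves come in sibling pairs (each such pair being a match $M_{i,s}$), every internal node has either two leaf children or two internal children.

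First I would establish the invariant that every internal node of $\Lambda_{i:S}$ is labeled by an element of $W$. The base case is exactly the lowest matches described above. For the inductive step I would consider an internal node whose two children are themselves internal; both of their labels lie in $W$ by induction, and I would check that a match between any two elements of $W$ again returns an element of $W$: if both inputs equal $i$ the winner is $i$; if one is $i$ and the other is some $s$ defeating $i$, then the winner is $s$; and if both inputs defeat $i$, then so does the winner. Applying this at the root already shows that the output is either $i$ or some $s \in S$ defeating $i$, which is half of what the lemma asserts.

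It then remains to characterize exactly when the output equals $i$. The key observation is that a match can return $i$ only when both of its inputs are $i$: once an input is some $s \in W$ with $s$ defeating $i$, the definition of $M_{i,j}$ ensures that $i$ never wins that match. By a downward induction this forces a node to be labeled $i$ if and only if every lowest match in its subtree returned $i$, i.e.\ if and only if $i$ defeats every $s$ occurring below it. Specializing to the root, whose subtree uses all of $S$, the output equals $i$ precisely when $i$ defeats every element of $S$, and is otherwise some $s \in S$ defeating $i$.

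I expect the only delicate point to be this last step --- the claim that $i$ can never ``recover'' once it has lost a single match. This is exactly where the asymmetry in the definition of $M_{i,j}$ is used (a match never awards the decision to $i$ over an opponent that beats it), and it is what prevents the output from being $i$ while some element of $S$ still defeats $i$. The remainder of the argument is routine bookkeeping over the tree.
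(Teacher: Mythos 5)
Your proposal is correct and follows essentially the same route as the paper's proof: evaluate the sibling leaf matches first, observe that every surviving label is either $i$ or an element of $S$ that defeats $i$, and note that $i$ can never eliminate such an element, so $i$ wins only if no such element ever appears. Your node-wise invariant is just a slightly more formal bookkeeping of the paper's "at least one element of $R$ always survives" argument.
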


\begin{proof}
  The leaves of this tree consist of independent pairwise matches between
  $i$ and every candidate in $S$.  We start by evaluating the pairwise
  matches at the leaves, writing the winners of those $|S|$ matches on
  their parents, and deleting the original leaves.  Each of those winners
  will either be $i$ (if $i$ won the match), or a candidate of $S$ who
  defeats $i$.  In particular, at this point, if $i$ defeats every
  candidate in $S$, then $i$ is the only label that remains, and the winner
  is clearly $i$, as claimed.  Otherwise, all leaves of the tree are now
  labeled with only two types of candidates: $i$ itself, and a nonempty
  sub-collection $R \subset S$ of candidates from $S$ who defeat $i$.

  We now continue to evaluate the tree.  Whenever two elements of $R$ are
  matched against each other, one wins, and one element of $R$ moves on as
  the winner.  However, whenever an element of $R$ is matched against $i$,
  the element of $R$ wins, and moves on as the winner.  Thus, as the
  evaluation continues, there is always at least one element of $R$.  This
  holds all the way up to the root layer, and so we conclude that the
  winner of $\Lambda_{i:S}$ is an element of $R$, i.e., a candidate who
  defeats $i$, as claimed.
\end{proof}

\bigskip

We now construct our voting trees $\Omega_n$ recursively.  The following
lemma steadily improves the performance guarantee as the number of
candidates increases.

\begin{lemma}
  Let $\Omega_n$ be a voting tree for $n$ agents with performance guarantee
  $k$.  Then there is a voting tree $\Omega_{n+k+1}$ for $n+k+1$ agents
  with performance guarantee at least $k+1$.
  \label{lem:lower-induct}
\end{lemma}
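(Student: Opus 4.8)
The plan is to treat $\Omega_n$ as a black box that, on the first $n$ agents, always returns a vertex $w$ of out-degree at least $k$ within $[n]$; write $D \subseteq [n]$ for the (at least $k$) old agents that $w$ defeats. I would then adjoin $k+1$ brand-new agents, forming a set $N$ with $|N| = k+1$ disjoint from $[n]$, and try to manufacture one additional, guaranteed, \emph{distinct} victory for the eventual winner. The crucial bookkeeping point is that any victory over a member of $N$ is automatically distinct from the victories over $D$, since $N \cap [n] = \emptyset$; so a single extra win against a fresh agent already pushes the out-degree from $k$ to $k+1$.

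First I would confront $w$ with the fresh agents using the $\Lambda$ gadget of Lemma \ref{lem:against-S}. Concretely, I feed $w$ into a $\Lambda_{w:N}$-style sub-tree, using one copy of $\Omega_n$ per fresh agent so that the variable label $w$ can be matched against each element of $N$ and the winners combined. By the persistence argument of Lemma \ref{lem:against-S}, the output of this stage is either $w$ itself, in the case where $w$ beats every member of $N$, or some fresh agent $c \in N$ that defeats $w$. In the first case I am already finished: $w$ defeats $D$ together with all $k+1$ fresh agents, so its out-degree is at least $k + (k+1) \geq k+1$.

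The entire difficulty is concentrated in the second case, where a fresh agent $c$ defeats $w$: on its own, Lemma \ref{lem:against-S} only certifies that $c$ beats $w$, i.e.\ out-degree at least $1$, and since $|N| = k+1$ is small, $c$ cannot by itself accumulate $k+1$ victories inside $N$. The fix I would pursue is to never let a fresh agent advance merely for beating $w$; instead, each fresh agent $c$ should first have to earn its keep against the old agents, by routing it through $\Lambda_{c:[n]}$. Lemma \ref{lem:against-S} then says that such a $c$ either defeats all of $[n]$ (so its out-degree is at least $n \geq k+1$, using that $n \geq k+1$ holds along the recursion, and moreover it automatically beats $w$ and hence legitimately wins the top-level match), or else $\Lambda_{c:[n]}$ surrenders some old agent $s$ with $s \to c$. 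The global structure I would build is therefore a top-level match $M\bigl(\,\Omega_n([n]),\,G\,\bigr)$, where $G$ is the fresh-agent gadget just described, and the proof is a case analysis on the winner of this match, maintaining the invariant that the output defeats $D$ (or an $n$-element set) plus at least one previously uncounted vertex.

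The step I expect to be the genuine obstacle, and where the construction must be most carefully engineered, is exactly this ``leaked old agent'' branch: when $\Lambda_{c:[n]}$ returns an old agent $s$, I must guarantee that the resulting output still possesses a certified $(k+1)$-st \emph{distinct} victory, rather than merely re-deriving $w$'s original $k$ wins over $D$ (the danger being precisely that $s \in D$). Making the several fresh agents and their $\Lambda_{c:[n]}$ tests interlock so that, for \emph{every} adversarial tournament simultaneously, no under-qualified vertex can reach the root, is the crux of the argument; the two cases above are comparatively routine consequences of Lemma \ref{lem:against-S}, and I would expect the final construction to pin down $G$ so that this distinctness is forced.
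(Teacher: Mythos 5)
Your proposal has a genuine gap, and you have correctly located it yourself: the ``leaked old agent'' branch is not a routine detail to be engineered away, it is fatal to the decomposition you chose. If you keep a single copy of $\Omega_n$ on the fixed set $[n]$ and adjoin $k+1$ fresh agents, then the adversary can make every fresh agent $c$ beat only $w$ (and lose to everything else), and can make the old agent $s$ that $\Lambda_{c:[n]}$ surrenders be a vertex with no certified wins beyond $s \to c$. Once an uncertified old vertex can surface at the top of your gadget $G$, the final match $M(\Omega_n([n]), G)$ can be won by a vertex about which you know essentially nothing, and no amount of interlocking among the $k+1$ fresh agents repairs this, because all certification in your scheme flows from the single run of $\Omega_n$ on the single set $[n]$.

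The paper's construction avoids this by inverting your allocation of ``new'' material: it introduces only \emph{one} uncertified vertex, labeled $0$, and spends the combinatorial budget on \emph{many} copies of $\Omega_n$ instead --- one copy $\Omega_S$ for each of the $\binom{n+k}{n}$ subsets $S$ of the other $n+k$ candidates, all hanging off a single $\Lambda_{0:[N]}$ gadget. A pigeonhole argument then shows that the set $W$ of winners of these copies has $|W| \geq k+1$: if $|W| \leq k$, some $n$-subset $S'$ would be disjoint from $W$, yet the winner of $\Omega_{S'}$ lies in $S'$, a contradiction. Now Lemma~\ref{lem:against-S} finishes cleanly: either $0$ wins and beats all of $W$ (out-degree at least $k+1$), or some $w \in W$ wins, and $w$ carries its certified $k$ wins inside its own $S$ (which excludes $0$) plus the new win over $0$. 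The essential idea you are missing is that every non-$0$ label reaching the $\Lambda$ gadget must already be the certified output of some copy of $\Omega_n$ --- there must be no path to the root for a vertex that has not earned $k$ wins on a vertex set avoiding the one designated new opponent.
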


\begin{proof}
  To simplify notation, let the $n+k+1$ candidates be indexed $\{0, 1,
  \ldots, n+k\}$.  Let $N = \binom{n+k}{n}$, and consider the tree
  $\Lambda_{0:[N]}$ from Lemma \ref{lem:against-S}, where $[N] = \{1, 2,
  \ldots, N\}$.  For each $i \in [N]$, arbitrarily select a distinct subset
  $S \subset [n+k]$ of size $n$, and remove the leaf labeled $i$ from our
  original $\Lambda_{0:[N]}$.  In its place, insert the tree $\Omega_S$,
  defined as the voting tree $\Omega_n$ for $n$ candidates with performance
  guarantee $k$, but labeled with the $n$ elements of $S$ instead of the
  $n$ elements $\{1, \ldots, n\}$.

  We claim that the resulting tree guarantees that the winner has
  out-degree at least $k+1$.  Indeed, suppose that it has been fed a
  tournament on $\{0, \ldots, n+k\}$ as input.  First, evaluate all
  $\binom{n+k}{n}$ of the $\Omega_S$ subtrees, and write their winning
  candidates at their roots.  The key observation is that at least $k+1$
  distinct candidates emerge from these subtrees.  Indeed, let $W$ be the
  collection of all such intermediate winners.  If $|W| \leq k$, then there
  would be some $n$-set $S' \subset [n+k]$ which was disjoint from $W$.
  The winner of $\Omega_{S'}$ would then be outside $W$, contradiction.

  At this point, we have the structure of $\Lambda_{0:[N]}$, except that
  the set $W$ appears as the non-0-labeled leaves.  By Lemma
  \ref{lem:against-S}, either candidate 0 wins, in which case it defeats
  every candidate in $W$ (producing out-degree at least $k+1$), or some
  candidate $w \in W$ wins, defeating candidate 0 as well.  Since the
  out-degree guarantee for $w$ was already $k$ from its $\Omega_S$, in which
  candidate 0 did not appear, we conclude that $w$ has out-degree at least
  $k+1$, completing the other scenario.

\end{proof}

Our main asymptotic lower bound now follows by induction.

\medskip

\noindent \emph{Proof of Theorem \ref{thm:lower}.}  For $n=2$, we take a
single match between candidates 1 and 2, and it is clear that this
guarantees that the winner has out-degree at least 1.  Repeated application
of the previous lemma then produces voting trees for $\frac{k(k+1)}{2} + 1$
candidates which guarantee out-degree at least $k$.  \hfill $\Box$

\section{Perfect manipulator tournaments}

We now proceed to our second result.  For this construction, let us fix
$n=2^d$ as a power of two.  We begin by introducing compact notation that
we will use to describe various voting trees in this section, all of which
will be complete binary trees with full bottom layers.
We represent such a tree by listing its leaves from left to right as an
$m$-tuple $(x_1, x_2,\ldots, x_m)$ of integers in $[n]$, where $m$ is a
power of two.  For example, the $n$-tuple $(1, 2, \ldots, n)$ corresponds
to the voting tree mentioned in the introduction, which achieved the
previous best performance guarantee of $\log_2 n$.

Using the above notation, we now describe our first gadget, which we will
use as a building block in our construction.  We will show that it has the
property that when it is applied to any perfect manipulator tournament
$\{\alpha\} \cup B \cup C$, then the winner is never in $C$.  It is perhaps
intuitive that such a voting tree might exist, because candidates in $C$
only have a single candidate $\alpha$ who they are capable of defeating,
and therefore have a disadvantage.

\begin{definition}
  Define the permutation $\phi$ of $[n]$ as:
  \begin{displaymath}
    \phi(i) = \left\{
      \begin{array}{ll}
        (i+1)/2 & \text{if $i$ is odd, and} \\ 
        n/2 + i/2 & \text{if $i$ is even.}
      \end{array}
    \right.
  \end{displaymath}
  Let $\Phi$ be the voting tree corresponding to the $2n$-tuple $(1,
  \ldots, n, \phi(1), \ldots, \phi(n))$.
  \label{def:shuffle-tree}
\end{definition}

\begin{figure}[h]
\includegraphics[scale=1]{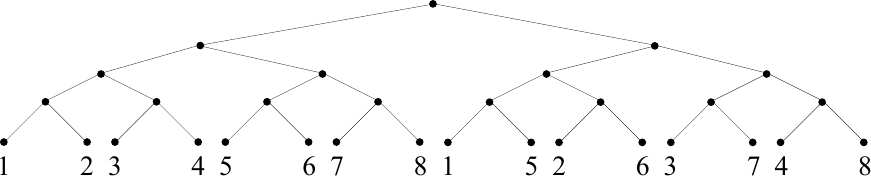} 
\centering
\caption{\footnotesize The voting tree $\Phi$ for $n=8$.}
\label{fig:shuffle-tree}
\end{figure}

This tree is illustrated in Figure \ref{fig:shuffle-tree}. The action of
the permutation is so that the right half of the tree is a perfect shuffle
of the left half of the tree.  The utility of this structure will become
clear in the proof of the following lemma.

\begin{lemma}
  Whenever $\Phi$ is applied to any perfect manipulator tournament, the
  winner is not in $C$. 
\label{lem:phi}
\end{lemma}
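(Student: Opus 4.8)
The plan is to reduce everything to a question about the three-element ``rock--paper--scissors'' tournament on the classes. In any perfect manipulator tournament the outcome of a match depends only on the classes of the two participants: $\alpha$ beats every member of $B$, $B$ beats $C$, and $C$ beats $\alpha$, while matches within $B$ or within $C$ keep the winner inside that class. Consequently the \emph{class} of the winner produced by any voting tree is obtained by relabeling each leaf with its class ($A$ for $\alpha$, or $B$ or $C$) and evaluating the same tree on the $3$-cycle $A\to B\to C\to A$; the internal orientations of $B$ and $C$ never matter. So I would first prove that this class-evaluation of $\Phi$ never outputs $C$, for every assignment of classes to $[n]$ having exactly one $A$ and with $B$ and $C$ nonempty.

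Next I would analyze a single complete binary tree with exactly one $A$-leaf and all remaining leaves in $\{B,C\}$. Following $\alpha$ from its leaf up to the root, at each level it plays the winner of a sibling subtree containing no $A$; on $\{B,C\}$ the class $B$ always beats $C$, so such a sibling evaluates to $B$ if it contains any $B$-leaf and to $C$ if it is entirely $C$. Tracking $\alpha$'s lineage, it keeps class $A$ until it first meets an all-$C$ sibling (which turns it into $C$), after which a later $B$-sibling would turn it into $B$. Recording the sibling classes from bottom to top as a $\{B,C\}$-string $s$, this shows that the output is $A$ iff $s$ is all $B$, is $C$ iff $s$ has the sorted form $B\cdots B\,C\cdots C$ with at least one $C$, and is $B$ otherwise. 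At the root of $\Phi$ the two half-winners meet, and a short check of the $3$-cycle shows the overall winner lies in $C$ exactly when both half-winners have class in $\{A,C\}$ with at least one of them $C$. Hence it suffices to show that the two sibling-strings $s^L$ (for the left half on leaves $1,\dots,n$) and $s^R$ (for the right half on $\phi(1),\dots,\phi(n)$) are never both sorted-with-a-$C$; equivalently, if both are sorted then both are all $B$.

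The heart of the argument is to exploit the perfect shuffle. Writing leaf indices in binary, $\phi$ is a cyclic bit-rotation, so the dyadic ``sibling blocks'' of the right half become, in label coordinates, rotated blocks transverse to the dyadic sibling blocks of the left half. Each sorted-with-a-$C$ string certifies an all-$C$ region equal to the complement of a block $K\ni\alpha$ (dyadic for the left, rotated for the right), together with the requirement that every nested $\alpha$-sibling block inside $K$ still contains a $B$. I would first record the top-level consequences: $s^L$ sorted-with-$C$ forces the label-half not containing $\alpha$ to be all $C$, while $s^R$ sorted-with-$C$ forces the set of labels whose second-most-significant bit differs from $\alpha$'s to be all $C$. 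The mixed case is then immediate: if $s^L$ is all $B$ (so the left outputs $\alpha$) while $s^R$ is sorted-with-$C$, the latter forces $\alpha$'s left-side quarter-sibling block (of size $n/4$) to be entirely $C$, contradicting the fact that an all-$B$ string needs a $B$ there.

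The remaining case, both strings sorted-with-a-$C$, is where the real work lies and is the step I expect to be the main obstacle. Here the two all-$C$ half-conditions force every $B$ into the single quarter of labels sharing $\alpha$'s top two bits, and one must descend further, level by level, to contradict the nested ``each sibling block contains a $B$'' requirements of both trees. I would carry this out either by explicit binary-coordinate bookkeeping of the rotated blocks $\widetilde K_R$ (which fix a contiguous middle band of bits while freeing the top bit and a low band, so they genuinely cut across the straight dyadic blocks), or by an induction on $d=\log_2 n$ that peels off the shared top structure and recurses into $\alpha$'s half, using that $B$ and $C$ are nonempty to force the final contradiction.
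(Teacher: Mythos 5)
Your reduction to the class tournament (the $3$-cycle on $\{A,B,C\}$), the lineage-string characterization of a tree with a single $A$-leaf (output $A$ iff the sibling string is all $B$, output $C$ iff it has the form $B\cdots B\,C\cdots C$ with at least one $C$, output $B$ otherwise), and the root condition are all correct, and they give a cleaner organizing framework than the paper's proof, which instead argues directly by contradiction on whether $\Phi_L$ or $\Phi_R$ is won by an element of $C$. Your treatment of the mixed case ($s^L$ all $B$, $s^R$ sorted-with-$C$) via the quarter-sibling is also right. But as written the proof is incomplete: the case you yourself flag as ``the main obstacle'' --- both $s^L$ and $s^R$ sorted with a $C$ --- is accompanied only by two untried strategies (binary-coordinate bookkeeping of the rotated blocks, or induction on $d$), not an argument. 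You also never treat the other mixed case, $s^L$ sorted-with-$C$ and $s^R$ all $B$, which your own root condition requires you to exclude.

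Both missing cases close with one observation whose ingredients you already have but never deploy: the \emph{bottom-most} sibling of $\alpha$ in $\Phi_R$ is a single leaf, namely the unique label differing from $\alpha$ only in its most significant bit (the shuffle pairs $j$ with $n/2+j$ in the first round of $\Phi_R$), and that label lies in the half of $[n]$ \emph{not} containing $\alpha$. If $s^L$ is sorted-with-$C$, its top entry forces that entire half to be all $C$, so $s^R$ begins with $C$; this immediately kills ``$s^R$ all $B$,'' and if $s^R$ is sorted it must then be all $C$, which makes every label other than $\alpha$ lie in $C$ and contradicts $B\neq\emptyset$. No level-by-level descent or induction on $d$ is needed --- this is essentially the paper's first case translated into your string language. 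So your skeleton is sound and genuinely different from the paper's, but the submission leaves the central case as a real gap.
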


\begin{proof}
Suppose for contradiction that $\Phi$ has been applied to such a
tournament, and that the winner is in $C$. The final match of $\Phi$ was
between the winners of its left and right subtrees, which we will refer to
as $\Phi_L$ and $\Phi_R$. Note that by the assumption that a member of $C$
won, the final match must have been between $\alpha$ and an element of $C$,
or between two elements of $C$, so neither subtree's winner can be in $B$. 

Assume that the winner of $\Phi_L$ is in $C$. Note that $\Phi_L$
corresponds to the $n$-tuple $(1,\dots,n)$, as defined at the beginning of
this section. So $\Phi_L$'s leaves contain every element of the tournament
exactly once, and the label $\alpha$ appears in either $\Phi_L$'s left or
right subtree as a leaf. Without loss of generality, suppose that $\alpha$
appears in the left subtree of $\Phi_L$. Any match between an element of
$B$ and something other than $\alpha$ will be won by an element of $B$. So
the right subtree of $\Phi_L$ will be won by an element of $B$ if any
elements of $B$ appear on its leaves, then making it impossible for
$\Phi_L$ to be won by an element of $C$. Thus, the right subtree of
$\Phi_L$'s must have all of its leaves labeled from $C$. 

In $\Phi_R$, each label $1,\dots,\frac{n}{2}$ is initially matched against
a label of $\frac{n}{2}+1,\dots,n$, due to the action of the shuffle
$\phi$. These labels appear on the leaves of the left and right subtrees of
$\Phi_L$, respectively.  So in $\Phi_R$, $\alpha$ is matched with an
element of $C$, and eliminated in the first round. Since at $B$ is
nonempty, the elements of $B$ will continue to dominate every match of
$\Phi_R$, unchallenged by $\alpha$, and an element of $B$ will win
$\Phi_R$. It then will beat the element of $C$ that won $\Phi_L$, making
the winner of $\Phi$ an element of $B$, contradicting our assumptions.

The other possibility is that $\Phi_R$ is won by an element of $C$. The
same argument we used on $\Phi_L$ shows that either the left or the right
subtree of $\Phi_R$ has leaves entirely contained in $C$. Let us suppose
that it is the right subtree of $\Phi_R$, as the other case is similar.
This right subtree contains the nodes $\{\phi(\frac{n}{2}+1),\dots,
\phi(n)\}=\{\frac{n}{4}+1,\dots,\frac{n}{2}\} \cup \{\frac{3n}{4}+1,n\}$.
These are the nodes of the right subtree of the left subtree of $\Phi_L$
and the right subtree of the right subtree of $\Phi_L$. So it's clear that
those trees must have winners in $C$. Therefore $\alpha$ cannot be the
winner of either the left or right subtree of $\Phi_L$.

However, $\alpha$ must be initially matched with an element of $b \in B$ in
$\Phi_R$, because otherwise it will be eliminated in the first round, and
$B$ will win $\Phi_R$ unchallenged, contradicting our assumption that
$\Phi_R$ is won by $C$.  Since $\phi$ alternately maps indices of the left
and right subtrees of $\Phi_L$ into matched pairs, the two subtrees of
$\Phi_L$ have the property that one contains $\alpha$, and the other
contains $b$ and not $\alpha$.  As before, the latter subtree will be won
by $B$, and the previous paragraph's conclusion implies that the other
subtree of $\Phi_L$ is not won by $\alpha$.  Therefore, $\Phi_L$ is won by
$B$, which then defeats the element of $C$ which we assumed as the winner
of $\Phi_R$, contradicting the fact that $\Phi$ was won by $C$.
\end{proof}

Our next gadget enables us to force $\alpha$ to lose by transferring the
status of the ``perfect manipulator'' to another node of $T$.  For each $i
\in [n]$, define $\Lambda_i$ to be the voting tree $\Lambda_{i:S}$ from
Definition \ref{def:against-S}, with $S = [n] \setminus \{i\}$.  To unify
notation, let $A = \{\alpha\}$ in the perfect manipulator tournament. The
same argument as used in Lemma \ref{lem:against-S} shows that the winners
of this tree ``rotate'' the sets $A$, $B$, and $C$.

\begin{lemma}
  When $\Lambda_i$ is evaluated on a perfect manipulator tournament $T$,
  the winner is in $C$ if $i \in A$, in $A$ if $i \in B$, and in $B$ if $i
  \in C$.
  \label{lem:one-against-all}
\end{lemma}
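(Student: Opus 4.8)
The plan is to apply Lemma~\ref{lem:against-S} directly, since $\Lambda_i = \Lambda_{i:S}$ with $S = [n]\setminus\{i\}$ consists precisely of matching $i$ against every other candidate. The lemma tells us that the winner is either $i$ itself (if $i$ defeats everyone) or some candidate in $S$ who defeats $i$. In a perfect manipulator tournament, no vertex defeats everyone else (the structure is genuinely cyclic: $\alpha$ loses to $C$, $C$ loses to $B$, $B$ loses to $\alpha$), so the first case only arises in degenerate situations; the main work is to track, in the second case, which of the three blocks $A$, $B$, $C$ the vertex defeating $i$ must lie in.

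The key observation is that in a perfect manipulator tournament, the set of vertices that defeat any given vertex is contained in exactly one block. If $i \in A = \{\alpha\}$, the only vertices that defeat $\alpha$ are those in $C$, so by Lemma~\ref{lem:against-S} the winner of $\Lambda_i$ must lie in $C$. If $i \in B$, then $i$ is defeated only by $\alpha$ (since $B$ beats $C$, and within $B$ we would need to check, but the manipulator structure as defined only specifies the inter-block edges) --- here I would note that the winner is forced into $A$, which means I must verify that $i$ does not defeat everyone and that the only block capable of beating it is $A$. Similarly, if $i \in C$, then $i$ is defeated by everything except $\alpha$, so $i$'s only outgoing edge is to $\alpha$; the candidates defeating $i$ lie in $B \cup (C \setminus \{i\})$, and I would argue the winner lands in $B$.

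First I would restate that by Lemma~\ref{lem:against-S}, the winner either is $i$ (only if $i$ beats all of $S$) or is a vertex beating $i$. Then I would handle each of the three cases by identifying which block contains the vertices that defeat $i$, invoking the perfect-manipulator structure. The subtlety, and the main obstacle, lies in the cases $i \in B$ and $i \in C$: the definition of a perfect manipulator tournament only pins down the edges \emph{between} the blocks $A$, $B$, $C$, and leaves the internal orientations within $B$ and within $C$ unspecified. So for $i \in C$, the set of vertices defeating $i$ includes all of $B$ together with possibly some vertices inside $C$; I need to argue that the winner of $\Lambda_i$, which by the lemma is \emph{some} vertex beating $i$, is guaranteed to be in $B$ rather than in $C\setminus\{i\}$. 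This requires the same refinement used in the proof of Lemma~\ref{lem:against-S}: the set of candidates defeating $i$ that survive the internal matches will always contain an element of the ``strongest'' block, and I would verify that once both $B$-vertices and $C$-vertices that beat $i$ are present, the $B$-vertices dominate (since $B$ beats $C$) and hence the surviving winner is in $B$. An analogous domination argument settles the $i \in A$ and $i \in B$ cases, so the heart of the proof is checking that the inter-block dominance relations force the winner into the claimed block regardless of the unspecified intra-block edges.
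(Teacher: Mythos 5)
Your proposal is correct and follows essentially the same route as the paper: after the leaf matches, the block that dominates $i$'s block survives in full and is closed under every remaining match (the ``unchallenged'' argument from the proof of Lemma~\ref{lem:against-S}), so its members cannot all be eliminated and one of them wins. If anything, you are more explicit than the paper about the cases $i \in B$ and $i \in C$, where the candidates defeating $i$ may span two blocks because the intra-block edges are unspecified; the paper treats only $i \in A$ and dismisses the other cases with a ``without loss of generality.''
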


\begin{proof}
  Without loss of generality, suppose that $i \in A$.  After all of the
  initial leaf matches are evaluated, all intermediate winners are either
  from $A$ or $C$, because all candidates from $C$ were immediately
  eliminated by $i$.  Importantly, since all three classes $A$, $B$, and
  $C$ are nonempty in perfect manipulator tournaments, at least one element
  of $C$ survives.  Now all remaining candidates are from either $A$ or
  $C$, and so just as in the proof of Lemma \ref{lem:against-S}, $C$ is
  unchallenged, and an element of $C$ emerges as the winner.
\end{proof}

Next, we compose $\Lambda_i$ with itself to perform a ``double
rotation'' on perfect manipulator tournaments.

\begin{definition}
  For any $i \in [n]$, start with the
  voting tree $\Lambda_i$.  At each leaf, note the label (suppose it
  was $j$), erase the label, and then insert a copy of $\Lambda_j$ with
  its root at that leaf.  Call the result $\Lambda_i^2$.
\end{definition}

\begin{corollary}
  When $\Lambda^2_i$ is evaluated on a perfect manipulator tournament $T$,
  the winner $\Lambda^2_i(T)$ is in $B$ if $i \in A$, in $C$ if $i \in B$,
  and in $A$ if $i \in C$.
  \label{cor:one-against-all-2}
\end{corollary}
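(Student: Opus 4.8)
The plan is to prove the corollary as a direct consequence of Lemma \ref{lem:one-against-all}, by tracking how the rotation behaves under composition. The key idea is that $\Lambda_i^2$ was built by taking $\Lambda_i$ and replacing each leaf labeled $j$ with a copy of $\Lambda_j$. So evaluation proceeds in two stages: first every inner $\Lambda_j$ subtree is evaluated, and its winner is written at the corresponding leaf of the outer $\Lambda_i$ skeleton; then the outer skeleton is evaluated on those winners. The obstacle is not difficulty but bookkeeping: one must verify that Lemma \ref{lem:one-against-all} can be applied cleanly to the outer skeleton even though its leaves no longer carry the original labels $j$, but rather the winners of the inner subtrees.

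First I would classify the winners produced by the inner layer. By Lemma \ref{lem:one-against-all}, each inner subtree $\Lambda_j$ produces a winner whose class depends only on the class of $j$: a leaf originally labeled from $A$ produces a $C$-winner, a $B$-labeled leaf produces an $A$-winner, and a $C$-labeled leaf produces a $B$-winner. Thus the inner layer applies one rotation $A \to C \to B \to A$ to the class of every leaf label simultaneously. Crucially, the outer skeleton $\Lambda_i$ was itself a $\Lambda_{i:S}$ tree, whose behavior (as established in Lemma \ref{lem:against-S} and Lemma \ref{lem:one-against-all}) depends only on the \emph{classes} of the labels at its leaves, not on their specific identities, and on which class the root label $i$ falls into.

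Next I would apply Lemma \ref{lem:one-against-all} to the outer skeleton. After the inner layer, the outer $\Lambda_i$ is being evaluated on a set of candidates that have been rotated one step. The subtlety is that the root copy of $\Lambda_i$ has its own leaf originally labeled $i$, which is now the winner of an inner $\Lambda_i$; I would argue that what matters for the outer evaluation is the class of this distinguished candidate, which after one inner rotation is the image of $i$'s class under $A \to C \to B \to A$. Applying Lemma \ref{lem:one-against-all} to the outer layer then produces a \emph{second} rotation of the class of the overall winner. Composing the two rotations, if $i \in A$ the outer distinguished candidate lies in $C$, so the outer layer (behaving as a ``$\Lambda$ with root in $C$'') yields a winner in $B$; tracing $i \in B$ and $i \in C$ analogously gives the claimed images in $C$ and $A$ respectively.

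Finally I would confirm that the nonemptiness hypotheses of Lemma \ref{lem:one-against-all} survive the composition. Since each inner $\Lambda_j$ returns a candidate in a well-defined class, and the three classes before the inner layer are all nonempty, I would check that after the inner layer each of $A$, $B$, $C$ is still represented among the outer leaves so that the outer application of Lemma \ref{lem:one-against-all} is valid; this holds because the outer skeleton $\Lambda_i = \Lambda_{i:[n]\setminus\{i\}}$ has leaves spanning all of $[n]$, hence all three classes, and the rotation of a nonempty class is nonempty. The main obstacle, then, is purely the careful argument that the outer skeleton's guaranteed behavior transfers to the rotated intermediate winners rather than the original labels, which I expect to resolve by emphasizing that both preceding lemmas are stated in terms of class membership alone.
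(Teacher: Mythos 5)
Your proposal is correct and follows essentially the same route as the paper: evaluate the inner $\Lambda_j$ subtrees first, use Lemma \ref{lem:one-against-all} to see that their winners' classes are the one-step rotations of the original labels' classes, and then rerun the lemma's argument on the outer skeleton (checking that the class which defeats the distinguished candidate's new class is still represented) to obtain a second rotation. The paper's proof is a terser version of exactly this two-stage argument.
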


\begin{proof}
  Without loss of generality, suppose that $i \in A$.  By Lemma
  \ref{lem:one-against-all}, the winner of $\Lambda_i$ is in $C$, and
  elements of both $B$ and $A$ are represented amongst the winners of the
  other $\Lambda_j$.  Therefore, the previous lemma's argument implies that
  the winner of $\Lambda_i^2$ is from the set which defeats $C$, i.e., $B$.
\end{proof}

We are now ready to describe our main construction for this section. Start 
with the voting tree $\Lambda^2_j$ and replace each leaf labeled $j$ with
a copy of the voting tree $\Phi$ from Definition \ref{def:shuffle-tree}
and call the resulting tree $\Psi$.

\medskip

\begin{proof}[Proof of Theorem \ref{thm:manipulator}]
  Let $T$ be the perfect manipulator tournament given as input.  With
  respect to $T$, when $\Phi$ is evaluated we will be given a vertex
  which is not in $C$. Now when something which is not $C$ is feed as 
  input into $\Lambda^2_i$ then we get something which is not in $A$
  by Corollary \ref{cor:one-against-all-2}. So, the output of the whole tree 
  $\Psi$ will not be $\alpha$ as desired. 
\end{proof}

\section{Arithmetic circuits}

In this section, we construct more complex chains of voting trees for $n=3$
which can perform arithmetic modulo three.  An essential caveat is that
such computation is clearly impossible when the input tournament is
transitively oriented, and so we design our circuits to succeed no matter
which non-transitive tournament is input.  The two such tournaments are
shown in Figure~\ref{fig:tournaments}.

\begin{figure}[h]
\includegraphics[scale=.25]{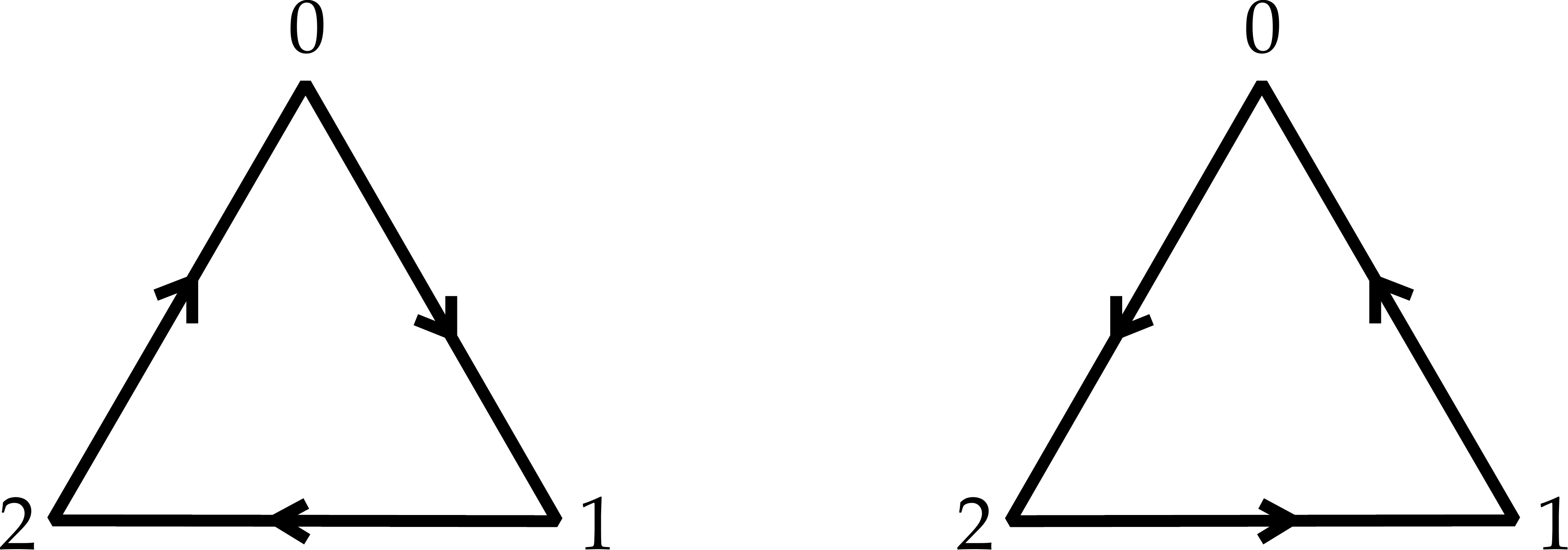} 
\centering
\label{fig:tournaments}
\caption{\footnotesize There are two non-transitive tournaments on three
vertices. We call the tournament on the left the \emph{clockwise}\/
tournament, and the tournament on the right the \emph{counter-clockwise}\/
tournament.}
\end{figure}

We will now create trees of increasing complexity by developing robust
``gates'' and putting them together.  However, this time, because our aim
is to send outputs of gates as inputs into other gates, we will design
voting trees whose leaves are labeled with both tournament vertices (0, 1,
2) and variables $X$, $Y$, etc.  Our basic building block, which we call a
\emph{yield gate}, is again along the lines of Definition
\ref{def:against-S}, and is illustrated in Figure~\ref{fig:beatsGate}.

\begin{figure}[h]
\includegraphics[scale=.25]{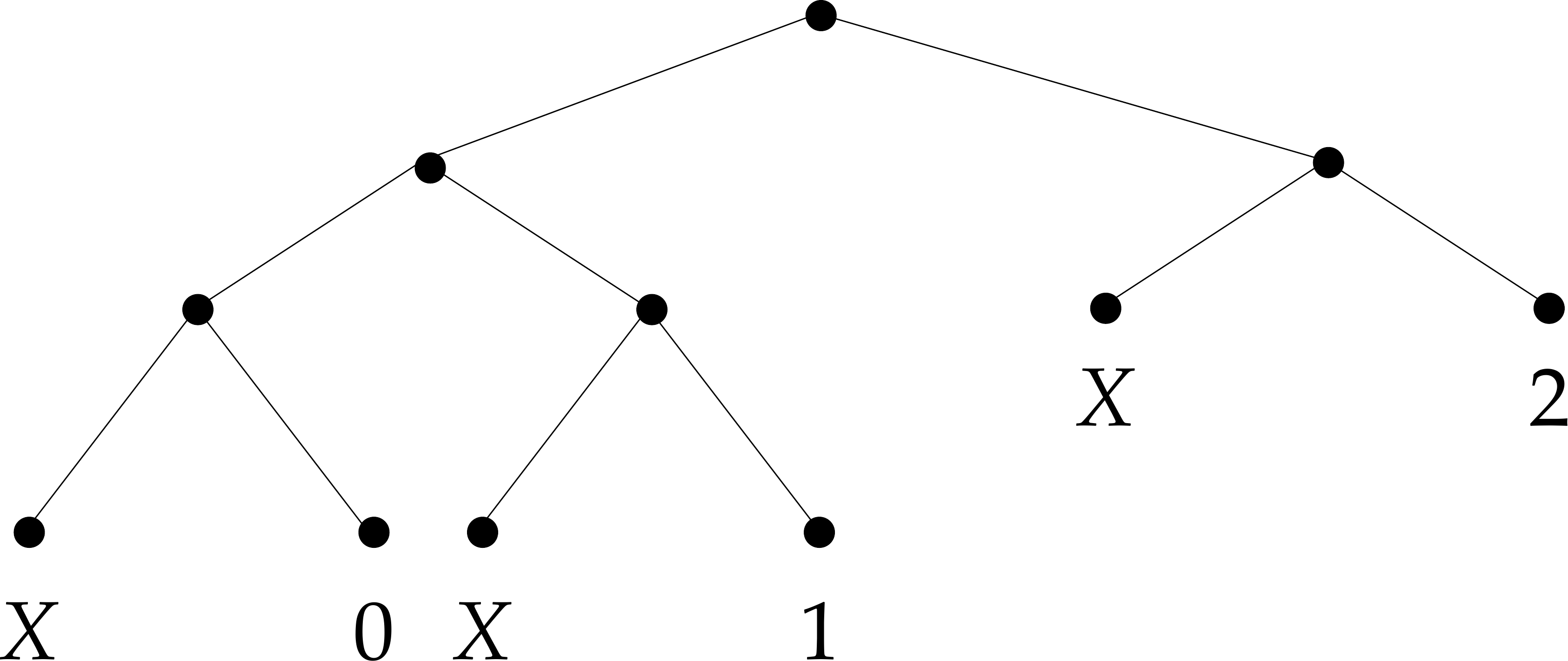} 
\centering
\caption{\footnotesize A \emph{yield gate}, a tree which outputs a candidate which beats the
input $X$.}
\label{fig:beatsGate}
\end{figure}

The same argument as used in the proof of Lemma \ref{lem:one-against-all}
produces the following conclusion.

\begin{corollary}
  For any non-transitive tournament $T$ and any vertex $x \in T$, if all
  labels $X$ are replaced with $x$ in the yield gate, and the tree is
  evaluated with respect to $T$, then the winner is the unique vertex which
  beats $x$.
  \label{cor:yield}
\end{corollary}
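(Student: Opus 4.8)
The plan is to reduce the statement directly to the ``against-$S$'' domination argument of Lemma~\ref{lem:against-S} (equivalently, the first-round analysis in the proof of Lemma~\ref{lem:one-against-all}), exploiting that every non-transitive tournament on three vertices is a directed $3$-cycle. First I would record the relevant structural fact: in such a $T$, each vertex has in-degree exactly one, so for every $x \in \{0,1,2\}$ there is a \emph{unique} vertex $w$ with $\overrightarrow{wx} \in T$. This single observation simultaneously supplies the existence and uniqueness asserted in the statement and pins down $w$ as the intended output, so the remaining work is purely combinatorial.

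Next I would interpret the yield gate, after every leaf label $X$ has been replaced by $x$, as an instance of $\Lambda_{x:S}$, where $S$ is the collection of labels that $x$ is matched against at the leaves. The essential point is that $S$ contains $w$, the unique vertex beating $x$, so $x$ does \emph{not} defeat every element of $S$. Consequently the first alternative of Lemma~\ref{lem:against-S}, in which $x$ itself survives, is ruled out, and the lemma forces the winner to be some $s \in S$ that defeats $x$; by the uniqueness from the first step, $s$ can only be $w$. If one prefers to avoid quoting the lemma verbatim (since one leaf match may pit $x$ against a copy of itself), I would instead verify the first round by hand: every match of $x$ against a vertex it beats, or against itself, returns $x$, while the single match against $w$ returns $w$, so after one round only the labels $x$ and $w$ survive, with at least one copy of $w$ present. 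From there the domination argument is identical to Lemma~\ref{lem:against-S}: since $w$ beats $x$, a copy of $w$ wins every subsequent match and propagates to the root.

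I expect the only step requiring genuine care to be the appeal to non-transitivity, as this is precisely what excludes the degenerate branch in which $x$ beats all of its opponents and thereby wins the gate itself. On a transitive tournament the dominant vertex would indeed survive, so the hypothesis cannot be dropped; verifying that the $3$-cycle guarantees both the presence of a beater of $x$ among its opponents \emph{and} the uniqueness of that beater is the crux, after which the tree combinatorics follow mechanically from the earlier lemmas.
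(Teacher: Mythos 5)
Your proposal is correct and follows essentially the same route as the paper, which simply invokes the domination argument of Lemma \ref{lem:one-against-all} (itself the argument of Lemma \ref{lem:against-S}); you additionally spell out the two details the paper leaves implicit, namely the uniqueness of the beater in a $3$-cycle and the harmless match of $x$ against a copy of itself.
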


\subsection{Addition}

Next, we combine yield gates to create a negative sum gate, which takes $X$
and $Y$ as inputs, and outputs $-X-Y$ (modulo 3) with respect to any
non-transitive tournament.  A simple match between two yield gates, shown
in Figure~\ref{fig:almostSub}, almost achieves this behavior.

\begin{figure}[h]
\includegraphics[scale=.25]{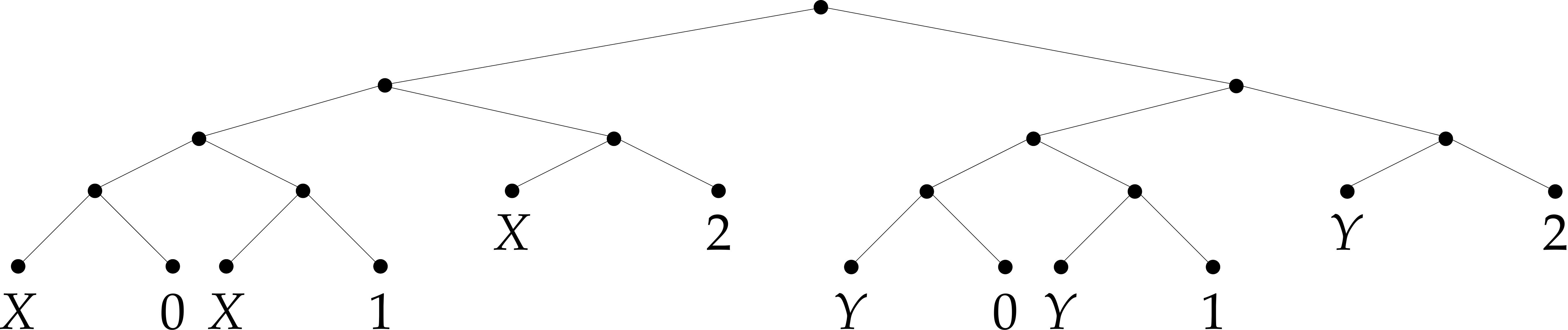} 
\centering
\caption{\footnotesize A simple combination of two yield gates.}
\label{fig:almostSub}
\end{figure}

Let us analyze the behavior of this tree with respect to the various
tournaments and inputs $X$ and $Y$.  If $X=Y$, then both halves of the tree
return the same output, which is $X-1$ for the clockwise tournament, and
$X+1$ for the counter-clockwise tournament.  Now consider the case where
$X\neq Y$, and let $Z$ be the third vertex (so $Z = \{0, 1, 2\} \setminus
\{X, Y\}$). In the clockwise tournament, the two halves of the tree would
return $X-1$ and $Y-1$. If $X=Y-1$ then $X-1$ would beat $Y-1$, producing
$X-1$ as the winner.  Here, $X-1 = Z$ because $X-1$ is neither $X$ nor $Y$.
On the other hand, if $Y=X-1$ then we would have the result be $Y-1$ which
is again $Z$. Doing the same calculations for the counter-clockwise
tournament, we get that $X+1$ wins if $X=Y+1$, and $Y+1$ wins if $Y=X+1$.
In summary, whenever $X \neq Y$, regardless of the tournament's
orientation, the output is $Z$, which equals $-X-Y$ because $\{X, Y, Z\} =
\{0, 1, 2\}$ and $0 + 1 + 2 = 0$.  So the gate shown in
Figure~\ref{fig:almostSub} gives us the desired outcome of $-X-Y$ when
$X\neq Y$. We will now stack several gates of this type together to create
our desired gate, as illustrated in Figure \ref{fig:negSum}. 

\begin{figure}[h]
\includegraphics[scale=.25]{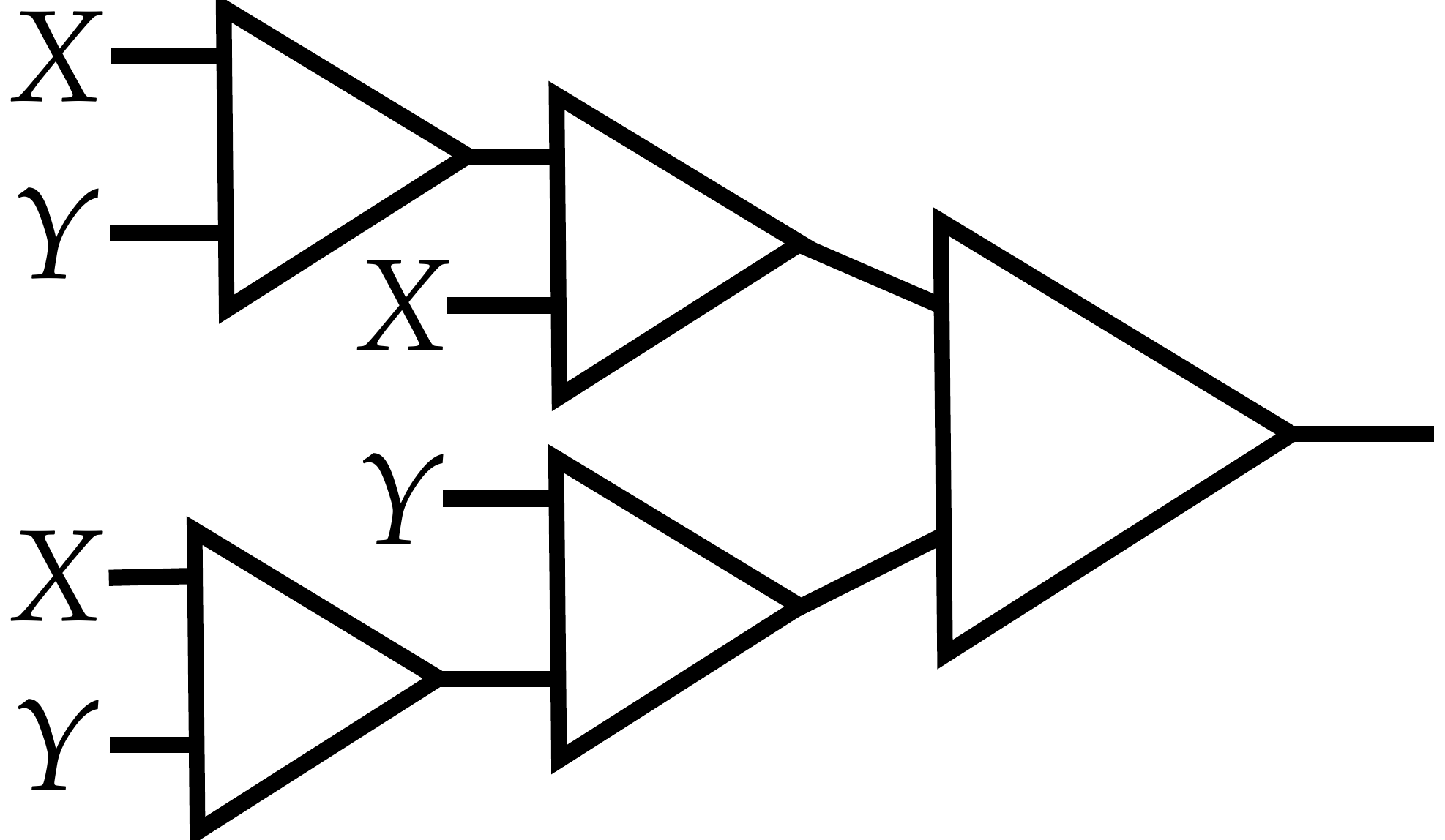} 
\centering
\caption{\footnotesize This circuit computes $-X-Y$.  Each triangular gate
  in this arrangement is a copy of the gate shown in Figure
  \ref{fig:almostSub}.  The two inputs on the left of each gate correspond
  to $X$ and $Y$, and the output on the right is the output of the tree.
  When the output of a structure is plugged into (say) the $X$-input of
  another gate, all copies of $X$ in the latter gate are replaced with
  identical copies of the structure.}
\label{fig:negSum}
\end{figure}

\begin{lemma}
  The gate in Figure \ref{fig:negSum} computes $-X-Y$.
  \label{lem:negSum}
\end{lemma}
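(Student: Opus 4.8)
The plan is to build on the analysis already carried out for the basic gate of Figure~\ref{fig:almostSub}. Write $g(a,b)$ for the output of that gate on inputs $a,b$, with all arithmetic taken modulo $3$. The discussion preceding the lemma established the only fact I need about it: $g(a,b) = -a-b$ whenever $a \neq b$, regardless of orientation, while on the diagonal $g(a,a) = a-1$ for the clockwise tournament and $g(a,a) = a+1$ for the counter-clockwise tournament. Thus $g$ already agrees with the target function $-X-Y$ everywhere \emph{except} on the diagonal $X=Y$, where it carries an orientation-dependent error of $\mp 1$. Since the circuit of Figure~\ref{fig:negSum} is a composition of copies of this gate, the whole proof is a case analysis on whether $X=Y$, in which I propagate labels layer by layer using the single fact above. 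Concretely, the stacked gate realizes $h(X,Y) = g\big(g(X,g(X,Y)),\, g(Y,g(X,Y))\big)$, and I will verify $h = -X-Y$ on all inputs and both orientations.

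First I would dispose of the off-diagonal case $X \neq Y$. Here the innermost gate gives $g(X,Y) = -X-Y =: Z$, the third vertex, which is distinct from both $X$ and $Y$ (since $X=Z$ would force $Y=X$, and likewise for $Y=Z$). Consequently the two middle gates also receive distinct inputs and compute exact negative sums: $g(X,Z) = -X-Z = Y$ and $g(Y,Z) = -Y-Z = X$. The top gate then sees the distinct pair $(Y,X)$ and returns $g(Y,X) = -X-Y$, as desired. Because the off-diagonal behavior of $g$ is orientation-free, this settles $X \neq Y$ for both tournaments at once.

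The remaining case $X=Y$ is the heart of the argument, and it is where the main obstacle lies: the single fixed tree has no access to the orientation, yet must cancel an error whose \emph{sign} depends on that orientation. I would treat the two tournaments in parallel. In the clockwise case the first gate gives $g(X,X)=X-1$, the middle gates then evaluate off-diagonal to $g(X,X-1) = -2X+1 = X+1$, and the top gate sees the tie $(X+1,X+1)$, whose clockwise diagonal value is $(X+1)-1 = X = -2X = -X-Y$. In the counter-clockwise case the first gate gives $X+1$, the middle gates return $-2X-1 = X-1$, and the top gate's counter-clockwise diagonal value is $(X-1)+1 = X$ again. So in both orientations the $\mp 1$ error produced at the bottom is exactly reversed by the $\pm 1$ correction of the top diagonal gate. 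The delicate points to nail down are precisely that the same wiring self-corrects for both orientations, that the intermediate labels never coincide in a way that would push a middle gate back onto its faulty diagonal, and that the three diagonal inputs are all covered; once these are checked against both tournaments the lemma follows. \qed
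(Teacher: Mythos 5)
Your proposal is correct and follows essentially the same route as the paper's proof: the same case split on $X=Y$ versus $X\neq Y$ and on the two orientations, with values propagated layer by layer through the composition $g\bigl(g(X,g(X,Y)),\,g(Y,g(X,Y))\bigr)$, and the same observation that the orientation-dependent $\mp 1$ error on the diagonal is cancelled by the final gate's own diagonal behavior. The ``delicate points'' you list at the end are in fact already verified by the computations you carry out, so nothing is missing.
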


\begin{proof}
  First we consider the case $X\neq Y$, and let $Z = \{0, 1, 2\} \setminus
  \{X, Y\}$. So, the output of both leftmost trees is $-X-Y = Z$. Now
  $Z\neq X$ and $Z\neq Y$ so the next layer of trees output $Y$ and $X$
  respectively.  The last gate receives $Y$ and $X$ as input, so outputs
  $-X-Y$ as desired. 

  Now we consider the case $X=Y$, starting with the clockwise tournament.
  By the discussion above, both leftmost trees output $X-1$.  Now the
  second layer of trees receive $X$ and $X-1$ as input, and therefore both
  give $X+1$ as output. The last tree now receives $X+1$ for both inputs,
  and so gives $(X+1)-1$ as output.  This is our desired outcome as $-X-X =
  X$ (modulo 3). In the final case, where $X=Y$ and the tournament is
  counter-clockwise, similar calculations yield that the last layer outputs
  $(X-1)+1=X = -X-X$ as desired. 
\end{proof}

Observe that if we send the constant 0 as the input for $Y$ in the $-X-Y$
gate, we produce a unary negation gate.  If we then feed the output of the
negative sum tree through this negation tree, then the output becomes the
ordinary sum $X+Y$, which was the objective of this section. 

\begin{corollary}
  There is a gate which takes inputs $X$ and $Y$, and outputs their sum
  $X+Y$ (in $\mathbb{F}_3$), when evaluated with respect to any
  non-transitive tournament on three vertices.
\end{corollary}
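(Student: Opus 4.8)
The plan is to leverage the negative sum gate from Lemma~\ref{lem:negSum} together with the yield gate of Corollary~\ref{cor:yield}, composing them so that the two applications of negation cancel. The key observation is that in $\mathbb{F}_3$, negation is an involution, so applying $-(\cdot)$ twice returns the original value; thus if I first compute $-X-Y$ and then negate the result, I obtain $-(-X-Y) = X+Y$, exactly the desired output.

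First I would construct a unary negation gate. The cleanest way is to specialize the $-X-Y$ gate of Figure~\ref{fig:negSum} by hard-wiring the $Y$-input to the constant $0$; that is, replace every leaf labeled $Y$ in that gate with a leaf labeled $0$. By Lemma~\ref{lem:negSum}, the resulting tree computes $-X-0 = -X$ for any non-transitive tournament and any input $x \in \mathbb{F}_3$. I should verify that this specialization is legitimate within the voting-tree framework, i.e.\ that substituting a literal label $0$ for the variable $Y$ is permitted (it is, since $0$ is simply one of the tournament vertices, and Lemma~\ref{lem:negSum}'s conclusion holds for every choice of $X$ and $Y$, in particular $Y = 0$).

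Next I would chain the two gates. Take the full $-X-Y$ gate, which outputs $Z := -X-Y$, and feed $Z$ as the input into the negation gate constructed above; concretely, replace every leaf labeled $X$ in the negation gate with an identical copy of the entire $-X-Y$ structure, following the composition convention described in the caption of Figure~\ref{fig:negSum}. Since the negation gate outputs $-W$ whenever its input evaluates to $W$, and since the inner structure evaluates to $-X-Y$ independently of the tournament's orientation, the composite tree outputs $-(-X-Y) = X+Y$ in $\mathbb{F}_3$.

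The main subtlety is not any computation but ensuring the composition is robust, namely that the correctness of each gate is genuinely independent of which non-transitive tournament is input. Both Lemma~\ref{lem:negSum} and Corollary~\ref{cor:yield} already guarantee orientation-independent outputs, so when one gate's output becomes another's input, the inner gate resolves to a fixed field element before the outer gate acts, and no interaction between the two tournaments can occur. I therefore expect the only real point requiring care to be a clean statement that this modularity—each gate producing a well-defined element of $\mathbb{F}_3$ regardless of orientation—is precisely what makes the gates chainable, which is exactly the property Lemma~\ref{lem:negSum} supplies. With that in hand, the corollary follows immediately.
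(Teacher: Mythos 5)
Your proposal is correct and matches the paper's own argument exactly: the paper likewise specializes the $-X-Y$ gate with $Y=0$ to obtain a unary negation gate and then feeds the output of the negative-sum gate through it to compute $-(-X-Y)=X+Y$. The remarks on modularity of the gates are consistent with the composition convention the paper establishes in the caption of Figure~\ref{fig:negSum}.
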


The gates constructed thus far will be used repeatedly in later sections,
and Figure~\ref{fig:gates} shows how they will be compactly represented in
the remainder of this paper.

\begin{figure}[h]
\includegraphics[scale=.25]{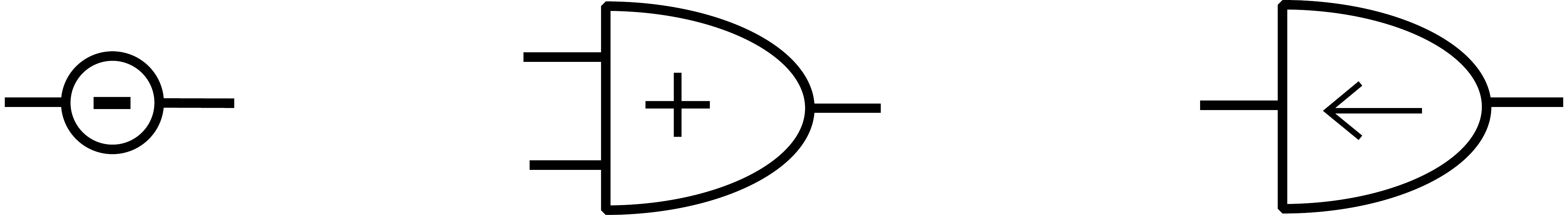} 
\centering
\caption{\footnotesize These symbols will represent (from left to right)
  the negation gate, addition gate, and yield gate. Inputs enter the left
  side of each gate and outputs exit via the right side of the gate.}
\label{fig:gates}
\end{figure}

\subsection{Squaring}

Having created our addition gate, we move toward multiplication.  To this
end, in this section we develop a unary gate which computes the function $X
\mapsto X^2$, regardless of which non-transitive 3-vertex tournament is
used.  Observe that the squaring function sends 0 to 0, and sends both 1
and 2 to the same value 1.  We start by creating a gate (shown in
Figure~\ref{fig:firstHalf}) with similar properties.

\begin{figure}[h]
\includegraphics[scale=.25]{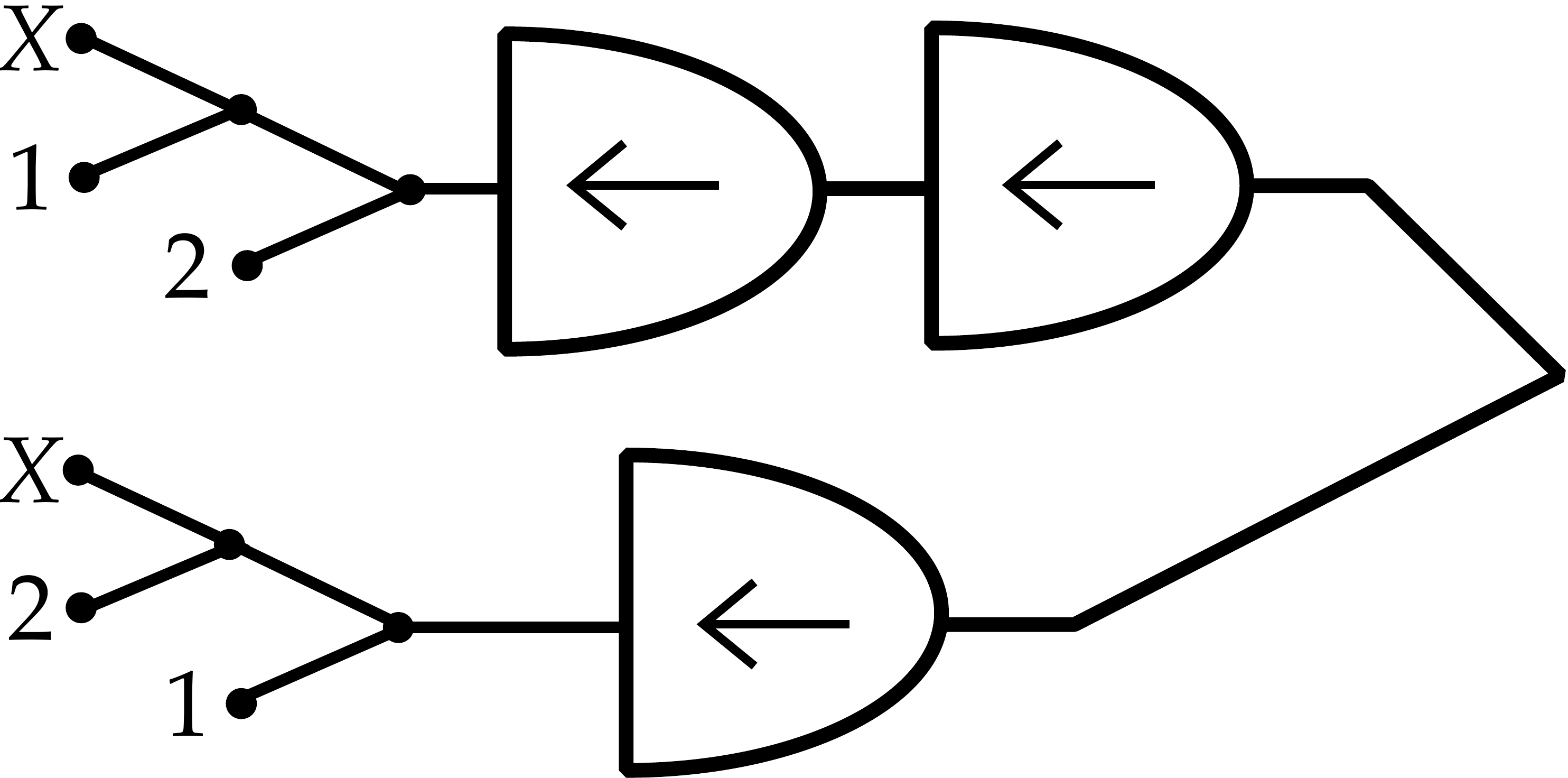} 
\centering
\caption{\footnotesize The first half of the squaring gate.  The final
merging on the right side represents an ordinary voting tree comparison
between the outputs of the top and bottom chain of gates.}
\label{fig:firstHalf}
\end{figure}

\begin{lemma}
  The gate in Figure~\ref{fig:firstHalf} sends $0 \mapsto 0$, $1 \mapsto
  2$, and $2 \mapsto 2$ on the clockwise tournament, and $0 \mapsto 2$, $1
  \mapsto 1$, and $2 \mapsto 1$ on the counter-clockwise tournament.
  \label{lem:squaring-1}
\end{lemma}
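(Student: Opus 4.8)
The plan is to evaluate the gate by first computing, as explicit functions of the input $X$, the outputs of the top and bottom chains of sub-gates appearing in Figure~\ref{fig:firstHalf}, and then to resolve the single match that merges them on the right. The crucial point is that each sub-gate in the two chains is one of the three building blocks whose behavior has already been pinned down: by Corollary~\ref{cor:yield} a yield gate sends $X$ to the unique vertex beating it, which is $X-1$ on the clockwise tournament and $X+1$ on the counter-clockwise one; by Lemma~\ref{lem:negSum} and the subsequent discussion, the negation and addition gates compute $-X$ and $X+Y$ identically on both orientations. Composing along each chain therefore yields two closed-form expressions $f(X)$ and $g(X)$ in $\mathbb{F}_3$, where $f$ and $g$ may depend on the orientation precisely through the yield sub-gates they contain.

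First I would fix the edge conventions forced by the yield-gate semantics: on the clockwise tournament the edges run $0 \to 1 \to 2 \to 0$ (so the loser of a match is the vertex one larger, cyclically, than the winner), while on the counter-clockwise tournament they run $0 \to 2 \to 1 \to 0$. With these in hand I would tabulate $f(X)$ and $g(X)$ for all three inputs $X \in \{0,1,2\}$ on each of the two orientations, a total of six evaluations per chain, propagating the per-gate rules above through each composition.

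Finally, for each of the six (input, orientation) pairs I would resolve the terminating match between $f(X)$ and $g(X)$: if the two chain outputs coincide, the match returns that common value, and otherwise it returns the winner dictated by the edge direction recorded above. I would then check that the six resulting values are exactly $0 \mapsto 0$, $1 \mapsto 2$, $2 \mapsto 2$ on the clockwise tournament and $0 \mapsto 2$, $1 \mapsto 1$, $2 \mapsto 1$ on the counter-clockwise tournament. The main obstacle is bookkeeping rather than conceptual: since both the internal yield gates and the final two-candidate comparison are orientation-sensitive, one must keep the clockwise and counter-clockwise computations rigorously separate, and in particular be careful in each case to determine whether $f(X)$ and $g(X)$ agree (which forces the output directly) or disagree (which hands control to the tournament's edge direction), as these two regimes are exactly what produce the split between the fixed point and the collapsed pair recorded in the statement.
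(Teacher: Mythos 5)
Your plan is exactly the paper's proof: fix an orientation, propagate the input through the top and bottom chains gate by gate to get explicit values $f(X)$ and $g(X)$ for each $X\in\{0,1,2\}$, and then resolve the final match between them, repeating for the other orientation. The paper carries out precisely this six-case bookkeeping (noting along the way that on each fixed orientation one of the two chains turns out to be constant in $X$, which shortens the tabulation), so your proposal is correct and takes essentially the same route.
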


\begin{proof}
  Consider the clockwise tournament.  On the top left of the gate, if $X=0$
  then 0 beats 1 and then 2 beats 0.  Otherwise, the top left is simply the
  winner between 1 and 2, which in this case is 1. Feeding this through the
  yield gate twice we get the top produces 0 when 0 is input, and 2 when
  either 1 or 2 are input. On the bottom left of the tree, if $X=0$ then 2
  beats 0 and then 1 beats 2.  Otherwise, the bottom left is again the
  winner between 1 and 2 which is again 1.  So, for the clockwise
  tournament, the bottom always feeds 1 through a yield gate and produces
  0, regardless of the input $X$.  Matching the outputs of the top and the
  bottom, and evaluating with respect to the clockwise tournament, we find
  that the gate in Figure~\ref{fig:gates} outputs 0 if $X=0$ and outputs 2
  otherwise, as claimed.

  It remains to consider the counter-clockwise tournament.  On the bottom
  left of the tree, if $X=0$ then 0 beats 2 and then 1 beats 0. Otherwise,
  the bottom left is simply the winner between 1 and 2, which is 2. Feeding
  this through the yield gate, we find that the bottom is 2 when 0 is
  input, and 0 when either 1 or 2 are input. On the top left of the tree,
  if $X=0$ then 1 beats 0 and then 2 beats 1. Otherwise, the top left is
  again the winner between 1 and 2 which is again 2.  So, for the
  counter-clockwise tournament, the top always feeds 2 through a yield gate
  twice and produces 1, regardless of the input $X$.  Combining the top and
  the bottom we conclude that the gate outputs 2 if $X=0$ and outputs 1
  otherwise.
\end{proof}

At this point we are relatively close to getting the desired outcome, as
for each non-transitive tournament, the inputs 1 and 2 produce one result,
and the input 0 produces a different one.  We complete our squaring gate by
sending this output into the gate shown in Figure \ref{fig:secondHalf}.

\begin{figure}[h]
\includegraphics[scale=.25]{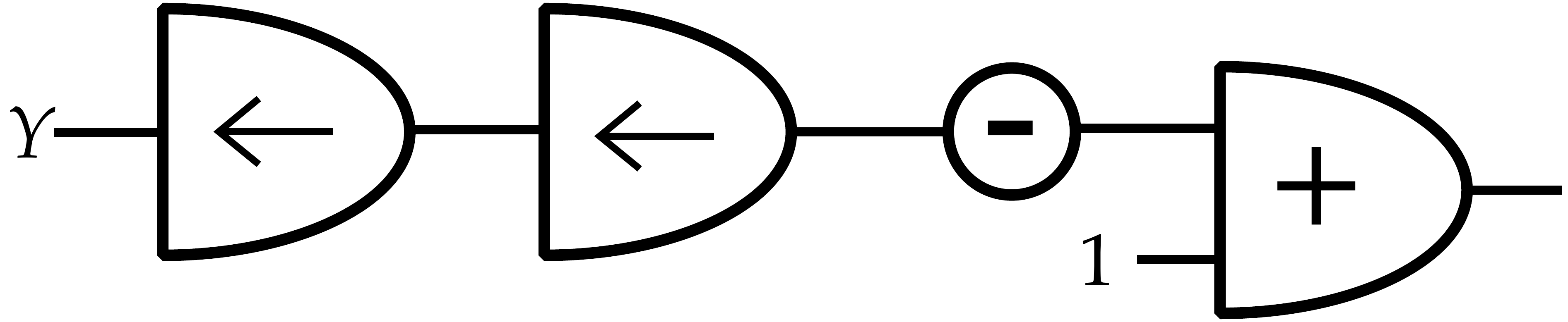} 
\centering
\caption{\footnotesize The second half of the squaring gate.  Here, $Y$ is
used to represent the input to this half, so there is no confusion with the
input $X$ to the first half (from Figure \ref{fig:firstHalf}).}
\label{fig:secondHalf}
\end{figure}

\begin{lemma}
  The composition of the gates in Figures \ref{fig:firstHalf} and
  \ref{fig:secondHalf} computes the function $X \mapsto X^2$.
  \label{lem:square-gate}
\end{lemma}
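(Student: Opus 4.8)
The plan is to chain the two halves and verify that the composition sends each tournament-vertex to its square, namely $0\mapsto 0$, $1\mapsto 1$, and $2\mapsto 1$, on both non-transitive tournaments. By Lemma~\ref{lem:squaring-1}, the first half already collapses the inputs into just two possible intermediate values for each orientation: on the clockwise tournament it outputs $0$ when $X=0$ and $2$ otherwise, and on the counter-clockwise tournament it outputs $2$ when $X=0$ and $1$ otherwise. So after the first half, the only information that survives is the single bit ``is $X=0$ or not,'' but encoded differently (and orientation-dependently) as one of two specific vertices. The entire job of the second half is therefore to re-decode these two intermediate values into the correct squares, simultaneously for both orientations.

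First I would tabulate exactly what the second half must accomplish. Writing $Y$ for its input (the output of the first half), I need the second-half gate, when evaluated on the clockwise tournament, to send the intermediate value $0$ to $0$ and the intermediate value $2$ to $1$; and when evaluated on the counter-clockwise tournament, to send $2$ to $0$ and $1$ to $1$. (These target outputs are read off from the desired squares: $0^2=0$ and $1^2=2^2=1$, matched against which $X$-values produced which intermediate value in each orientation, as given by Lemma~\ref{lem:squaring-1}.) Thus the claim reduces to a finite check: I would trace the input $Y$ through the second-half tree of Figure~\ref{fig:secondHalf} for each of these four (orientation, $Y$-value) pairs, using Corollary~\ref{cor:yield} to evaluate each yield gate and the basic match rule (defined via $M_{i,j}$) to evaluate each ordinary comparison.

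The verification itself is mechanical: at each yield gate I replace its output by the unique vertex that beats the current value in the given tournament, and at each plain match I select the winner according to the fixed cyclic orientation. Propagating these through the gate's structure for both orientations and both relevant intermediate values, I would confirm that the outputs are precisely $0,1,0,1$ in the four cases above, which are exactly the squares of the original inputs that produced them. Since Lemma~\ref{lem:squaring-1} guarantees that no other intermediate values can arise, this completes the proof that the composite gate computes $X\mapsto X^2$.

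The main obstacle is not conceptual but bookkeeping: the second-half gate must simultaneously perform the correct decoding on \emph{both} orientations, even though the same intermediate value (for instance $2$, which appears in the clockwise case for $X\neq 0$ and in the counter-clockwise case for $X=0$) is required to map to different squares depending on the orientation. The delicate point is therefore to confirm that the gate of Figure~\ref{fig:secondHalf} is genuinely orientation-aware in the right way --- that its internal yield gates and matches exploit the cyclic structure so that the mapping $2\mapsto 1$ holds clockwise while $2\mapsto 0$ holds counter-clockwise --- rather than collapsing to a single orientation-independent function, which could not possibly produce the correct squares on both tournaments at once.
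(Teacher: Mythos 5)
Your proposal is correct and follows essentially the same route as the paper: use Lemma~\ref{lem:squaring-1} to reduce to four (orientation, intermediate value) cases and mechanically trace each through the second half, with the correct target table $0,1,0,1$. The paper's write-up just organizes the bookkeeping slightly more cleanly by observing that after the two yield gates the value is already orientation-independent ($1$ iff $X=0$), so the remaining negation and addition gates only need to compute $1-(\cdot)$.
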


\begin{proof}
  Let $Y$ be the output of the gate in Figure \ref{fig:firstHalf} when $X$
  is input.  In the case of the clockwise tournament, $Y=0$ if $X=0$ and
  $Y=2$ otherwise. Feeding this through the two yield gates, we get 1 when
  $X=0$ and 0 otherwise. In the case of the counter-clockwise tournament,
  $Y=2$ if $X=0$ and $Y=1$ otherwise. When this passes through two yield
  gates, then we again get 1 when $X=0$ and 0 otherwise. So, regardless of
  the tournament used, the result after the two yield gates is 1 when
  $X=0$, and 0 otherwise.  We wish to have 0 when $X=0$ and 1 otherwise.
  This is achieved by subtracting our current result from 1, which is
  realized by the negation and addition gates in the right half of Figure
  \ref{fig:secondHalf}.
\end{proof}

\subsection{Multiplication}

Now that we have an addition gate and a squaring gate, we build a
multiplication gate via the identity
\begin{displaymath}
X^2+Y^2-(X+Y)^2=-2XY \,,
\end{displaymath}
which equals $XY$ modulo 3.  The gate in Figure~\ref{fig:multiply} computes
this formula, using the gates that we developed earlier in this section. 

\begin{figure}[h]
\includegraphics[scale=.25]{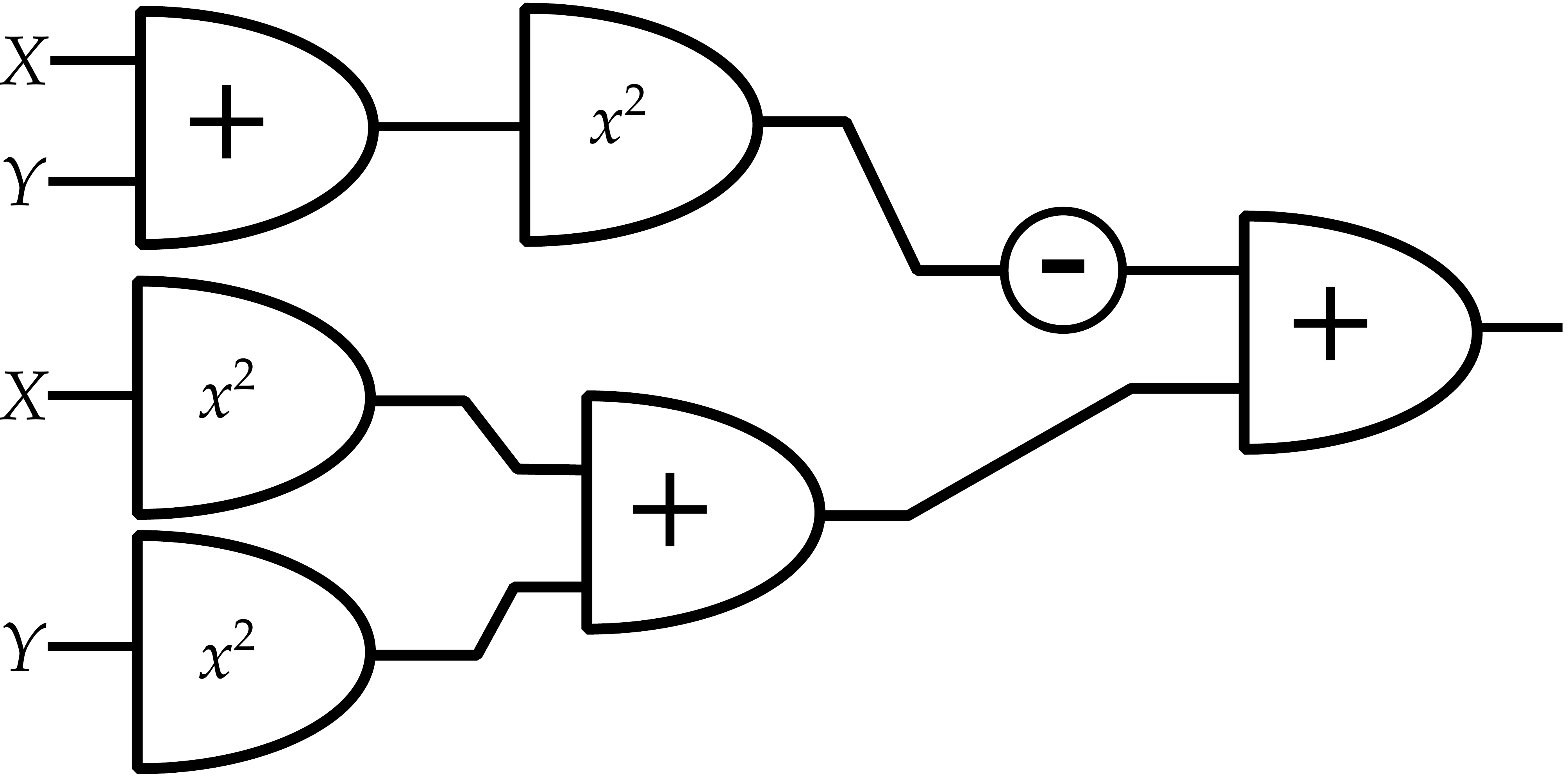} 
\centering
\caption{\footnotesize The multiplication gate.}
\label{fig:multiply}
\end{figure}

\section{Concluding Remarks}

%

This paper presented several constructions which use voting trees to
compute natural and non-trivial functions.  In particular, we substantially
improve upon the previous best performance guarantee of $\log_2 n$,
constructing trees which achieve order $\sqrt{n}$.  We presented our
arithmetic gates in our final section because they were initially developed
for lower bound constructions for the performance guarantee, and produced
our first construction which improved the lower bound.  We also feel that
they may be of independent interest, as they demonstrate that the
computational power of voting trees is sufficiently rich to perform
standard ternary arithmetic.

\end{document}